\def\@makefntext{\setlength{\parindent}{0pt}\@makefnmark}
\let\citationorig\citation
\def\citation#1{\citationorig{#1}\@for\@tempa:=#1\do{\@ifundefined{cit@\@tempa}{\global\@namedef{cit@\@tempa}{}}{}}}
\let\bibitemorig\bibitem
\def\bibitem#1{\@ifundefined{cit@#1}{\typeout{LaTeX Warning: Unused bibitem `#1'}}{}\bibitemorig{#1}}
\let\old@setaddresses\@setaddresses
\def\@setaddresses{\bigskip{\parindent 0pt\let\scshape\relax\let\ttfamily\relax\old@setaddresses}}
\newtheorem{theorem}{Theorem}[section]
\newtheorem{lemma}[theorem]{Lemma}
\newtheorem{proposition}[theorem]{Proposition}
\newtheorem{observation}[theorem]{Observation}
\newtheorem{conjecture}[theorem]{Conjecture}
\theoremstyle{definition}
\newtheorem{question}[theorem]{Question}
\renewenvironment{enumerate}{\begin{enumorig}[label={\upshape(\arabic*)}, noitemsep, topsep=3pt plus 3pt, leftmargin=*]}{\end{enumorig}}
\renewenvironment{itemize}{\begin{itemorig}[label=\textbullet, noitemsep, topsep=3pt plus 3pt, labelsep=.6em, labelindent=.2em, leftmargin=*]}{\end{itemorig}}
\newcommand{\bfk}{\mathbf{k}}
\newcommand{\bfone}{\mathbf{1}}
\newcommand{\bftwo}{\mathbf{2}}
\newcommand{\bfthree}{\mathbf{3}}
\newcommand{\cgD}{\mathcal{D}}
\newcommand{\cgR}{\mathcal{R}}
\newcommand{\cgX}{\mathcal{X}}
\newcommand{\lex}{\mathrm{lex}}
\newcommand{\setR}{\mathbb{R}}
\DeclareMathOperator{\Inc}{Inc}
\DeclareMathOperator{\Dn}{Dn}
\DeclareMathOperator{\Up}{Up}
\DeclareMathOperator{\Ar}{Ar}
\DeclareMathOperator{\dn}{dn}
\DeclareMathOperator{\up}{up}
\DeclareMathOperator{\BT}{BT}
\DeclareMathOperator{\Spec}{Spec}
\newcommand{\length}[1]{{\lVert #1\rVert}}
\newcommand{\downset}{\mathord{\downarrow}}
\let\leq\leqslant
\let\geq\geqslant
\title[Dimension of posets with planar cover graphs excl.\ two long incomp.\ chains]{Dimension of posets with planar cover graphs\\excluding two long incomparable chains}
\author[D.~M. Howard\and N. Streib\and W.~T. Trotter\and B. Walczak\and R. Wang]{David~M. Howard\and Noah Streib\and William~T. Trotter\and Bartosz Walczak\and Ruidong Wang}
\address[Noah Streib]{Center for Computing Sciences, 17100 Science Dr., Bowie, MD 20715, USA}
\email{\href{mailto:nsstrei@super.org}{nsstrei@super.org}}
\address[William~T. Trotter]{School of Mathematics, Georgia Institute of Technology, Atlanta, GA 30332, USA}
\email{\href{mailto:trotter@math.gatech.edu}{trotter@math.gatech.edu}}
\address[Bartosz Walczak]{Department of Theoretical Computer Science, Faculty of Mathematics and Computer Science, Jagiellonian University, Kraków, Poland}
\email{\href{mailto:walczak@tcs.uj.edu.pl}{walczak@tcs.uj.edu.pl}}
\address[Ruidong Wang]{Blizzard Entertainment, Irvine, CA 92618, USA}
\email{\href{mailto:ruwang@blizzard.com}{ruwang@blizzard.com}}
\thanks{A journal version of this paper appeared in \href{https://doi.org/10.1016/j.jcta.2018.11.016}{\emph{J.\ Comb.\ Theory Ser.~A} 164, 1--23, 2019}.}
\thanks{Bartosz Walczak was partially supported by National Science Center of Poland grant 2015/18/E/ST6/00299.}
\begin{document}

\begin{abstract}
It has been known for more than 40 years that there are posets with planar cover graphs and arbitrarily large dimension.
Recently, Streib and Trotter proved that such posets must have large height.
In fact, all known constructions of such posets have two large disjoint chains with all points in one chain incomparable with all points in the other.
Gutowski and Krawczyk conjectured that this feature is necessary.
More formally, they conjectured that for every $k\geq 1$, there is a constant $d$ such that if $P$ is a poset with a planar cover graph and $P$ excludes $\bfk+\bfk$, then $\dim(P)\leq d$.
We settle their conjecture in the affirmative.
We also discuss possibilities of generalizing the result by relaxing the condition that the cover graph is planar.
\end{abstract}

\maketitle

\section{Introduction}

We assume that the reader is familiar with basic notation and terminology for posets, including subposets, chains and antichains, minimal and maximal elements, linear extensions, order diagrams, and cover graphs.
Extensive background information on the combinatorics of posets can be found in~\cite{Tro-book,Tro95}.
We will also assume that the reader is familiar with basic concepts of graph theory, including subgraphs, induced subgraphs, paths and cycles, and planar graphs.

A subposet $Q$ of $P$ is \emph{convex} if $y\in Q$ whenever $x,z\in Q$ and $x<y<z$ in $P$.
When $Q$ is a convex subposet of $P$, the cover graph of $Q$ is an induced subgraph of the cover graph of $P$.
Traditionally, the elements of a poset are called \emph{points}, and this is what we do in this paper.

Dushnik and Miller~\cite{DM41} defined the \emph{dimension} of a poset $P$, denoted by $\dim(P)$, as the least positive integer $d$ for which there is a family $\cgR=\{L_1,\ldots,L_d\}$ of linear extensions of $P$ such that $x\leq y$ in $P$ if and only if $x\leq y$ in all $L_1,\ldots,L_d$.
Clearly, if $Q$ is a subposet of $P$, then $\dim(Q)\leq\dim(P)$.
A poset has dimension $1$ if and only it is a chain.

For $d\geq 2$, the \emph{standard example} $S_d$ is the poset of height $2$ consisting of $d$ minimal elements $a_1,\ldots,a_d$ and $d$ maximal elements $b_1,\ldots,b_d$ with $a_i<b_j$ in $S_d$ if and only if $i\neq j$.
As noted in~\cite{DM41}, $\dim(S_d)=d$ for every $d\geq 2$.
So every poset that contains a large standard example has large dimension.
On the other hand, it is well known that there are posets that have large dimension but do not contain the standard example $S_2$ (see the more comprehensive discussion in~\cite{BHPT16}).

In recent years, there have been a series of research papers exploring connections between the dimension of a poset $P$ and graph-theoretic properties of the cover graph of $P$.
This paper continues with that theme.
A poset $P$ is \emph{planar} if it has a drawing with no edge crossings in its order diagram.
All planar posets have planar cover graphs, and it is well known that there are non-planar posets with planar cover graphs (see~\cite{Tro-book}, page~67).

It is an easy exercise to show that the standard example $S_d$ is a planar poset when $2\leq d\leq 4$, while the cover graph of $S_d$ is non-planar when $d\geq 5$.
However, in~\cite{Tro78}, it is shown that for every $d\geq 5$, the non-planar poset $S_d$ is a subposet of a poset with a planar cover graph.
Subsequently, Kelly~\cite{Kel81} proved the stronger result: for every $d\geq 5$, the non-planar poset $S_d$ is a subposet of a planar poset $P$ with $\dim(P)=d$ (see Figure~\ref{fig:kelly}).

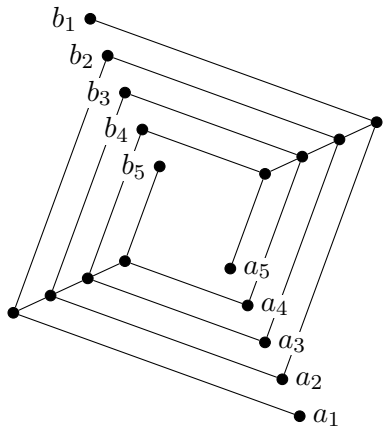
\begin{figure}[t]
\begin{tikzpicture}[scale=.54,rotate=25,baseline=(current bounding box.center)]
\tikzstyle{every node}=[circle,draw,fill,minimum size=4pt,inner sep=0pt]
\tikzstyle{every label}=[rectangle,draw=none,fill=white,inner sep=1pt,label distance=1.5pt]
\node (z1) at (-5,-0.25) {};
\node (z2) at (-4,-0.25) {};
\node (z3) at (-3,-0.25) {};
\node (z4) at (-2,-0.25) {};
\node (w1) at (5,0.25) {};
\node (w2) at (4,0.25) {};
\node (w3) at (3,0.25) {};
\node (w4) at (2,0.25) {};
\node[label={[yshift=-0.5pt]right:$a_1$}] (a1) at (0.25,-5.5) {};
\node[label=left:$b_1$] (b1) at (-0.25,5.5) {};
\node[label={[yshift=-0.5pt]right:$a_2$}] (a2) at (0.25,-4.5) {};
\node[label=left:$b_2$] (b2) at (-0.25,4.5) {};
\path (z1) edge (a1) edge (b2) edge (z2);
\path (w1) edge (b1) edge (a2) edge (w2);
\node[label={[yshift=-0.5pt]right:$a_3$}] (a3) at (0.25,-3.5) {};
\node[label=left:$b_3$] (b3) at (-0.25,3.5) {};
\path (z2) edge (a2) edge (b3) edge (z3);
\path (w2) edge (b2) edge (a3) edge (w3);
\node[label={[yshift=-0.5pt]right:$a_4$}] (a4) at (0.25,-2.5) {};
\node[label=left:$b_4$] (b4) at (-0.25,2.5) {};
\path (z3) edge (a3) edge (b4) edge (z4);
\path (w3) edge (b3) edge (a4) edge (w4);
\node[label={[yshift=-0.5pt]right:$a_5$}] (a5) at (0.25,-1.5) {};
\node[label=left:$b_5$] (b5) at (-0.25,1.5) {};
\path (z4) edge (a4) edge (b5);
\path (w4) edge (b4) edge (a5);
\end{tikzpicture}
\caption{Kelly's example of a planar poset containing the standard example $S_5$ as a subposet}
\label{fig:kelly}
\end{figure}

In this paper, we do not distinguish between isomorphic posets, and we say that $P$ \emph{contains} $Q$ when there is a subposet of $P$ that is isomorphic to $Q$.
Also, we say $P$ \emph{excludes} $Q$ when $P$ does not contain $Q$.
For a positive integer $k$, we let $\bfk$ denote a $k$-element chain, and we let $\bfk+\bfk$ denote a poset consisting of two chains of size $k$ with all points in one chain incomparable with all points in the other.
The above-mentioned constructions of posets with planar cover graphs and arbitrarily large dimension raise the following questions.

\begin{question}
Which of the following statements are true for every poset $P$ with a planar cover graph and sufficiently large dimension?
\begin{enumerate}
\item $P$ has many minimal elements;
\item $P$ has large height, that is, $P$ contains $\bfk$ for some large
value of $k$;
\item $P$ contains $\bfk+\bfk$ for some large value of $k$;
\item $P$ contains $S_k$ for some large value of $k$.
\end{enumerate}
\end{question}

The construction in~\cite{Tro78} shows that for every $d\geq 2$, there is a poset with dimension $d$, a unique minimal element, a unique maximal element, and a planar cover graph.
On the other hand, in~\cite{TW16}, the following result is proved for planar posets.

\begin{theorem}
If\/ $P$ is a planar poset with\/ $t$ minimal elements, then\/ $\dim(P)\leq 2t+1$.
\end{theorem}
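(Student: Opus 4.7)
The plan is to fix a planar drawing of the order diagram of $P$ and exploit the left-to-right order of the minimal elements that this drawing induces. Place the drawing so that the $t$ minimal elements $m_1,\ldots,m_t$ lie on the outer face, enumerated from left to right. For each point $x\in P$, the planar embedding lets me follow the leftmost downward edge at each step to obtain a canonical descending path ending at some minimal element $\lambda(x)\in\{m_1,\ldots,m_t\}$; symmetrically I get $\rho(x)$ via the rightmost downward edge. These assignments are monotone in a weak sense: if $x<y$, then $\lambda(y)\leq_{\mathrm{lr}}\lambda(x)$ does not hold in general, but the leftmost and rightmost descending paths from $y$ sandwich those from $x$ within the planar drawing, which will be the main structural tool.

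Next I would build a family $\cgR$ of $2t+1$ linear extensions. One extension $L_0$ is canonical, for instance a topological sort by height with a left-to-right tiebreak. For each index $i\in\{1,\ldots,t\}$, I construct two extensions $L_i^-$ and $L_i^+$: in $L_i^-$, the points $x$ with $\lambda(x)=m_i$ are pushed as far down as possible subject to being a linear extension, while in $L_i^+$ the points $x$ with $\rho(x)=m_i$ are pushed as far up as possible. These constructions need to be performed carefully so that they remain linear extensions; the standard trick is to do a leftmost (resp.\ rightmost) greedy insertion that respects cover relations.

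The verification reduces to showing that every critical pair is reversed by some member of $\cgR$. By the standard reduction, it suffices to handle critical pairs $(a,b)$ in which $a$ is a minimal element, say $a=m_i$, and $b$ is maximal. Since $a\not\leq b$, no descending path from $b$ reaches $m_i$, so every minimal element appearing under $b$ lies in $\{m_1,\ldots,m_{i-1}\}\cup\{m_{i+1},\ldots,m_t\}$. If all minimal predecessors of $b$ are strictly to the right of $m_i$, then $L_i^-$ reverses $(a,b)$, and symmetrically $L_i^+$ handles the all-to-the-left case. The remaining case is when minimal predecessors of $b$ exist on both sides of $m_i$, and here I would appeal to $L_0$ together with a planarity argument: the leftmost and rightmost descending paths from $b$ in the drawing enclose a topological region, and $m_i$ lies in its interior by assumption, so any realizer of $a\not\leq b$ would require an edge incident to $m_i$ to escape this region, contradicting planarity or else yielding a cover of $a$ that is itself below $b$ (again contradicting criticality).

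The hard part will be the mixed case and the precise definition of $L_i^-$ and $L_i^+$. One must show that the greedy reassignments above do not break transitivity and that the planarity argument genuinely forces the desired reversal; the subtlety is that $b$ may lie high above and far from $m_i$, so the argument must track the leftmost/rightmost paths from $b$ down to minimal elements flanking $m_i$ and use the Jordan curve theorem applied to the union of these two paths and the horizontal line through the minimal elements. I would expect that induction on $t$ is unnecessary, but if the direct approach becomes unwieldy, I would fall back on removing the rightmost minimal element $m_t$ together with the convex subposet whose leftmost descending path ends at $m_t$, invoking the Trotter--Moore theorem ($t=1$ case, giving dimension at most $3$) as the base and arguing that deleting one such ``strip'' costs at most $2$ in dimension.
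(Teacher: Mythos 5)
First, a point of reference: this theorem is not proved in the present paper at all; it is quoted from Trotter and Wang \cite{bib:TroWan}, so there is no in-paper proof to compare against, and your proposal has to stand on its own. It does not, because its verification scheme rests on a reduction that is false as stated. You assert that ``by the standard reduction'' it suffices to reverse critical pairs $(a,b)$ in which $a$ is a minimal element and $b$ is a maximal element. The standard fact is only that it suffices to reverse all critical pairs, and critical pairs need not be min--max. Concretely, take $P$ with points $z,a_1,a_2,b_1,b_2$, cover relations $z<a_1$, $z<a_2$, $a_1<b_2$, $a_2<b_1$: this is a planar poset with one minimal element, it has \emph{no} incomparable pair consisting of a minimal and a maximal element, yet $\dim(P)=2$ because the critical pairs $(a_1,b_1)$ and $(a_2,b_2)$ form an alternating cycle. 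A family of linear extensions reversing all min--max incomparable pairs is therefore in general not a realizer, so your whole verification addresses only part of $\Inc(P)$. The known device for forcing critical pairs to be min--max modifies the poset by attaching new extremal elements; that increases the number of minimal elements (the very parameter $t$ in the bound $2t+1$) and need not preserve planarity of the order diagram, so it cannot simply be invoked here.

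Beyond this, the argument is a plan rather than a proof, as you partly acknowledge. The extensions $L_i^-$ and $L_i^+$ are never actually constructed: ``pushed as far down as possible subject to being a linear extension'' does not determine a specific order, and the claim that such an extension reverses every pair $(m_i,b)$ whose minimal predecessors all lie to one side is exactly what would have to be proved, not assumed. The ``sandwiching'' property of leftmost/rightmost descending paths under $x<y$ is also asserted without proof, and the mixed case is deferred to a Jordan-curve argument that is only gestured at; once the min--max assumption is dropped, $b$ need not be maximal and the pair $(a,b)$ need not even have $a$ among $m_1,\ldots,m_t$, so the case analysis as organized does not cover what must be reversed. Finally, the fallback induction (delete the ``strip'' of points whose leftmost descending path ends at $m_t$ and claim this costs at most $2$ in dimension) is precisely the substance of the theorem and is offered without any argument; nothing guarantees that this strip interacts with the rest of $P$ in a way that lets you splice two extra linear extensions onto a realizer of the smaller poset. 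The actual proof in \cite{bib:TroWan} is a separate, nontrivial argument, and none of its work is reproduced or replaced here.
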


Furthermore, it is shown in~\cite{TW16} that this inequality is tight when $t=1$ and $t=2$.
However, when $t\geq 3$, it is only known that there are planar posets with $t$ minimal elements that have dimension $t+3$.
Since a poset and its dual have the same dimension, entirely analogous statements can be made about maximal elements.

The second question was answered in the affirmative in~\cite{ST14}, where the following theorem (restated in a form consistent with the results of this paper) is proved.

\begin{theorem}
\label{thm:streib-trotter}
For every positive integer\/ $k$, there is an integer\/ $d$ such that if\/ $P$ is a poset that excludes\/ $\bfk$ and the cover graph of\/ $P$ is planar, then\/ $\dim(P)\leq d$.
\end{theorem}

The bound on $d$ from~\cite{ST14} is very weak, due to extensive use of Ramsey theory in the proof; however, greatly improved bounds are available via~\cite{MW17}.
Furthermore, it is shown in~\cite{JMW17} that planar posets excluding $\bfk$ have dimension bounded by $O(k)$.

Gutowski and Krawczyk \cite{GK-personal} posed the third question and conjectured that it should also have an affirmative answer.
In this paper, we will settle their conjecture in the affirmative by proving the following theorem, which is the main result of this paper.

\begin{theorem}
\label{thm:two-chains}
For every positive integer\/ $k$, there is an integer\/ $d$ such that if\/ $P$ is a poset that excludes\/ $\bfk+\bfk$ and the cover graph of\/ $P$ is planar, then\/ $\dim(P)\leq d$.
\end{theorem}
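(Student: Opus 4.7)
The plan is to reduce Theorem~\ref{thm:two-chains} to Theorem~\ref{thm:StrTro} via a BFS-level ``window'' decomposition of $P$, with the exclusion of $\bfk+\bfk$ used to control how dimension accumulates across the windows.

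After adjoining a common minimum $\hat 0$ to $P$ (which affects $\dim(P)$ by at most $1$) and rooting a BFS spanning tree of the cover graph at $\hat 0$, I would assign each point $x$ a level $\ell(x)$ equal to its BFS distance from $\hat 0$. Partition $P$ into horizontal strips $W_i = \{x : hi \leq \ell(x) < h(i+1)\}$ for a parameter $h$ to be chosen as a function of $k$. Each strip is a convex subposet with planar cover graph (an induced subgraph of a planar graph) and height at most $h$, so by Theorem~\ref{thm:StrTro} the dimension of each strip---and more generally of each union of a bounded number of consecutive strips---is bounded by some function $f(h)$.

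The crux is the gluing step: bounding $\dim(P)$ in terms of the dimensions of the individual strips. The heart of the matter is that one must control the ``pattern'' of long chains crossing the strip boundaries, where Kelly-type constructions (Figure~\ref{fig:kelly}) are the principal obstruction: they pack unbounded dimension into a planar poset of width~$2$ via two long, essentially incomparable vertical chains that thread through many consecutive levels. This is exactly where the exclusion of $\bfk+\bfk$ ought to intervene. With $h$ chosen sufficiently large relative to $k$, I would argue that any pair of chains crossing many consecutive strips and sustaining a large alternating cycle between them must be largely mutually incomparable, and hence contain $\bfk+\bfk$; so the number of essentially distinct ``cross-strip'' patterns contributing to dimension is bounded in terms of $k$.

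The main obstacle is the formal joint use of planarity and the $\bfk+\bfk$-exclusion to bound the number and interaction of long crossing chains. I expect this to be the content of the ``intermediate results of independent interest'' alluded to in the abstract, and to hinge on a careful analysis---likely Ramsey- or pigeonhole-flavoured---of the cyclic order induced on BFS-tree branches by a fixed planar embedding, combined with an iterative ``peeling'' of outermost chains from that cyclic order. Arranging the recursion so that each peeling step strictly reduces a well-founded invariant (rather than losing an exponential factor per level) is the principal technical challenge.
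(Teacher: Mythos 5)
Your proposal does not reach a proof; it has two concrete problems, one with the decomposition itself and one with the (acknowledged) gluing step, which is where all the real work lies.

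First, the BFS-strip decomposition does not have the properties you assert. A strip $W_i=\{x:hi\leq\ell(x)<h(i+1)\}$, where $\ell$ is cover-graph distance from $\hat 0$, is in general \emph{not} a convex subposet: two points of $W_i$ can be comparable only via chains that leave the strip, so the cover graph of the induced subposet need not be a subgraph of $G$ at all, and planarity of that cover graph is not automatic. Worse, a strip need not have bounded height: a cover edge changes BFS level by at most one but may leave it unchanged, so an arbitrarily long chain can sit inside a single strip, and Theorem~\ref{thm:StrTro} does not apply to it. The paper's decomposition is different in kind: one fixes a \emph{maximum chain} $C=\{c_1<\cdots<c_h\}$ and slices $P$ by the values $\dn(z),\up(z)$ relative to $C$; the exclusion of $\bfk+\bfk$ is used right there (Lemmas \ref{lem:Dn_iUp_j} and \ref{lem:Ar(i,i+1)}) to prove that the resulting convex subposets $\Dn(i)$, $\Up(j)$, $\Ar(i,i+1)$ have height $O(k)$, and only then does Theorem~\ref{thm:StrTro} give a uniform dimension bound $d_1$ for them. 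You have this dependency reversed: you treat the strip height $h$ as a free parameter and hope the $\bfk+\bfk$ exclusion intervenes later in some Ramsey-type way.

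Second, the step you yourself call the crux --- bounding $\dim(P)$ given bounds for the pieces --- is exactly the hard part, and your sketch of it (``any pair of chains crossing many strips and sustaining a large alternating cycle must contain $\bfk+\bfk$'') is not an argument and is not how the obstruction is overcome. Without further structure, piecewise bounds only give $\dim(P)=O(h\cdot d_1)$ (as in the paper's Proposition after Lemma~\ref{lem:safe}), and the paper even exhibits posets excluding $\bfthree+\bfthree$ showing that a naive height-times-local-dimension bound cannot be improved without using planarity again. The actual proof reduces to the ``dangerous'' pairs via Lemma~\ref{lem:safe}, fixes consistent witnessing paths in a plane drawing, splits the dangerous pairs into four classes $S_{LL},S_{LR},S_{RL},S_{RR}$ by the clockwise position of certain edges at the chain $C$, and then runs two separate topological arguments: for same-side pairs, the blue tree, special points, $a$-regions and the monotone invariant $\ell(a,b)\leq 4k-5$; for opposite-side pairs, a greedy sequence of nested regions $\cgD_i$ and the bound $N(b)\leq M(a)+q$ with $q=8k-9$, where the $\bfk+\bfk$ exclusion enters through the bounded length of witnessing paths. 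None of this is present, even in outline, in your proposal, so the gap is not a technicality but the entire core of the theorem.
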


While the conjecture of Gutowski and Krawczyk might seem entirely natural just from reflecting on the properties of the Kelly construction, it was also motivated by the results of \cite{BKS10,DJW12,FKT13,JM11,LMS+14}, where combinatorial properties of posets excluding $\bfk+\bfk$ played a central role.

The fourth question, which was apparently first raised in~\cite{Tro-book} (see the comment on page~119), remains open, and we consider it one of the central challenges in this area of research.
Most researchers feel that the answer is again ``yes''.
Formally, we can state the following conjecture.

\begin{conjecture}
\label{con:Sk}
For every positive integer\/ $k$, there exists an integer\/ $d$ such that if\/ $P$ is a poset that excludes the standard example\/ $S_k$ and the cover graph of\/ $P$ is planar, then\/ $\dim(P)\leq d$.
\end{conjecture}

In the next section, we provide a brief summary of notation, terminology, and background material.
This discussion applies to any research problem involving dimension.
Then, in Section~\ref{sec:two-chains}, we develop some properties of the class of posets that exclude $\bfk+\bfk$.
As these results may find application to other combinatorial problems for posets, the results of that section are presented for posets in general---with no assumption that the cover graph is planar.
The proof of our main theorem is given in the next three sections.
Finally, in Section~\ref{sec:connections}, we discuss possibilities of generalizing Theorem~\ref{thm:two-chains} and Conjecture~\ref{con:Sk} beyond planarity---to posets that have cover graphs with excluded minors and excluded topological minors.

\section{Notation, terminology, and background material}

Let $P$ be a poset.
A family $\cgR=\{L_1,\ldots,L_d\}$ of linear extensions of $P$ is called a \emph{realizer} of $P$ when the following holds: $x\leq y$ in $P$ if and only if $x\leq y$ in all $L_1,\ldots,L_d$.
Thus $\dim(P)$ is the least positive integer $d$ such that $P$ has a realizer of size $d$.
Accordingly, to establish an upper bound of the form $\dim(P)\leq d$, the most natural approach is simply to construct a realizer of size $d$ for $P$.
However, in recent papers \cite{FT00,FTW15,JMM+16,JMOW-arxiv,JMT+17,JMW17,JMW18,MW17,ST14,TWW18,Wal17}, another approach has been taken.
Let $\Inc(P)$ denote the set of ordered incomparable pairs of $P$.
Clearly, a family $\cgR$ of linear extensions of $P$ is a realizer of $P$ if and only if for every $(x,y)\in\Inc(P)$, there is $L\in\cgR$ with $x>y$ in $L$.
In this case, we say that $L$ \emph{reverses} the incomparable pair $(x,y)$.
More generally, when $S$ is a set of incomparable pairs of $P$, a linear extension $L$ \emph{reverses} $S$ when $x>y$ in $L$ for every $(x,y)\in S$.
A set $S\subseteq\Inc(P)$ is \emph{reversible} when there is a linear extension $L$ of $P$ that reverses $S$, and a family $\cgR$ of linear extensions \emph{reverses} $S$ when for every $(x,y)\in S$, there is $L\in\cgR$ that reverses $(x,y)$.
With these ideas in hand, when $S\subseteq\Inc(P)$, we can define the \emph{dimension} of $S$, denoted by $\dim(S)$, as the least positive integer $d$ for which there is a family of $d$ linear extensions of $P$ that reverses $S$.
Clearly, $\dim(P)=\dim(\Inc(P))$, so we can also say that $\dim(P)$ is the least positive integer $d$ for which there is a partition of $\Inc(P)$ into $d$ reversible sets.

An indexed family $\{(x_\alpha,y_\alpha)\}_{\alpha=1}^s$ of incomparable pairs of $P$ with $s\geq 2$ is called an \emph{alternating cycle} when $x_\alpha\leq y_{\alpha+1}$ in $P$ for every index $\alpha$ considered cyclically in $\{1,\ldots,s\}$ (that is, $y_{s+1}=y_1$).
An alternating cycle is \emph{strict} when there are no other comparabilities, that is, $x_i\leq y_j$ in $P$ if and only if $j\equiv i+1\pmod{s}$.
The following elementary lemma, proved in~\cite{TM77}, provides a convenient test to determine whether a subset of $\Inc(P)$ is reversible.

\begin{lemma}
If\/ $P$ is a poset and\/ $S\subseteq\Inc(P)$, then the following statements are equivalent:
\begin{enumerate}
\item $S$ is not reversible;
\item $S$ contains an alternating cycle;
\item $S$ contains a strict alternating cycle.
\end{enumerate}
\end{lemma}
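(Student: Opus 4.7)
My plan is to establish the cycle of implications $(3)\Rightarrow(2)\Rightarrow(1)\Rightarrow(3)$. The implication $(3)\Rightarrow(2)$ is immediate since every strict alternating cycle is an alternating cycle.

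For $(2)\Rightarrow(1)$, I would argue by contradiction. Suppose $\{(x_\alpha,y_\alpha)\}_{\alpha=1}^{s}\subseteq S$ is an alternating cycle and that some linear extension $L$ of $P$ reverses $S$. For every $\alpha$, reversing gives $y_\alpha<x_\alpha$ in $L$, while $x_\alpha\leq y_{\alpha+1}$ in $P$ forces $x_\alpha\leq y_{\alpha+1}$ in $L$. Concatenating these chains yields $y_1<y_2<\cdots<y_s<y_1$ in $L$, which is impossible.

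For $(1)\Rightarrow(3)$, the idea is to form an auxiliary relation and extract a shortest offending cycle. Let $R$ be the binary relation on $P$ given by $u\mathrel{R}v$ if either $u<v$ in $P$, or $(v,u)\in S$. If the transitive closure of $R$ were a strict partial order, then any of its linear extensions would be a linear extension of $P$ reversing $S$, contradicting~$(1)$. So there is a directed cycle $v_1\mathrel{R}v_2\mathrel{R}\cdots\mathrel{R}v_m\mathrel{R}v_1$; choose one of minimum length $m$. By minimality, no two consecutive edges of this cycle can both come from $<_P$ (else transitivity in $P$ would shorten it), so each $<_P$-step is flanked by steps of the second type. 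Collecting the $S$-edges in cyclic order as $(x_1,y_1),\ldots,(x_s,y_s)$ and observing that between consecutive $S$-edges one either has a single $<_P$-step $x_\alpha<y_{\alpha+1}$ in $P$ or immediate adjacency $x_\alpha=y_{\alpha+1}$, we obtain an alternating cycle in $S$. Finally, take an alternating cycle $\{(x_\alpha,y_\alpha)\}_{\alpha=1}^{s}$ in $S$ of minimum length $s$, and suppose for contradiction that $x_i\leq y_j$ in $P$ with $j\not\equiv i+1\pmod s$. The case $j\equiv i\pmod s$ is ruled out by incomparability of $(x_i,y_i)$, so $j\not\equiv i,i+1$; then
\[
(x_i,y_i),(x_j,y_j),(x_{j+1},y_{j+1}),\ldots,(x_{i-1},y_{i-1})
\]
(indices cyclic) is a strictly shorter alternating cycle in $S$, contradicting minimality. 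Hence the minimum-length alternating cycle is strict, proving~$(3)$.

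The only delicate step is the passage from a minimum directed cycle in $R$ to an alternating cycle: one must check carefully that minimality rules out two consecutive $P$-edges, and that allowing equality $x_\alpha=y_{\alpha+1}$ in the definition of ``alternating'' is exactly what accommodates two consecutive $S$-edges in the directed cycle. Once this bookkeeping is in place, the rest is routine.
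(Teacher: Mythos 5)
Your proof is correct, but note that there is nothing in the paper to compare it with: the lemma is quoted as a known elementary fact, with the proof attributed to Trotter and Moore, so the paper contains no argument of its own. What you give is essentially the standard proof of that result: (3)$\Rightarrow$(2) trivially, (2)$\Rightarrow$(1) by chaining $y_\alpha<x_\alpha\leq y_{\alpha+1}$ in a putative reversing extension, and (1)$\Rightarrow$(3) via the auxiliary relation $R$ whose acyclicity would yield a reversing linear extension, followed by extraction of a shortest $R$-cycle and then a shortest alternating cycle, which must be strict. The only details you leave implicit are that the extracted family satisfies $s\geq 2$, as the definition of an alternating cycle requires: a shortest $R$-cycle with exactly one $S$-edge would force $x_1=y_1$ or $x_1<y_1$ in $P$, contradicting $(x_1,y_1)\in\Inc(P)$, and the shortened cycle in the strictness step always keeps at least the two pairs indexed $i$ and $i-1$, hence has length at least $2$. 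These are one-line checks, so the argument is complete as it stands.
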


A typical approach to show that the set $\Inc(P)$ can be partitioned into $d$ reversible sets is by defining a $d$-coloring of the pairs in $\Inc(P)$ with the property that no (strict) alternating cycle is monochromatic.
However, the rules for assigning colors can be quite complicated, and that will certainly be the case here.

\section{Posets that exclude two long incomparable chains}
\label{sec:two-chains}

In this section, we present some general considerations on posets excluding two long incomparable chains.
If a poset $P$ excludes $\bfone+\bfone$, then $P$ is a chain, so $\dim(P)=1$.
For the rest of this section, we fix an integer $k\geq 2$ and a poset $P$ that excludes $\bfk+\bfk$.
We make no assumption on the structure of the cover graph of $P$.

Let $h$ denote the height of $P$, and let $C=\{c_1<\cdots<c_h\}$ be a chain in $P$ of size $h$.
For each point $z\in P-C$, define integers $\dn(z)$ and $\up(z)$ as follows:
\begin{align*}
\dn(z)&=\begin{cases}
0&\text{if $z$ is incomparable with $c_1$ in $P$,}\\
i&\text{otherwise, where $i$ is greatest in $\{1,\ldots,h\}$ such that $z>c_i$ in $P$;}
\end{cases}\\
\up(z)&=\begin{cases}
h+1&\text{if $z$ is incomparable with $c_h$ in $P$,}\\
j&\text{otherwise, where $j$ is least in $\{1,\ldots,h\}$ such that $z<c_j$ in $P$.}
\end{cases}
\end{align*}
Note that $0\leq\dn(z)\leq h-1$ and $2\leq\up(z)\leq h+1$ for every point $z\in P-C$, by maximality of the chain $C$.
Define
\begin{alignat*}{2}
\Dn(i)&=\{z\in P-C\colon\dn(z)=i\}&\quad&\text{for $0\leq i\leq h-1$,}\\
\Up(j)&=\{z\in P-C\colon\up(z)=j\}&\quad&\text{for $2\leq j\leq h+1$.}
\end{alignat*}

\begin{lemma}
\label{lem:Dn_iUp_j}
For\/ $0\leq i\leq h-1$, the subposet\/ $\Dn(i)$ of\/ $P$ is convex and has height at most\/ $2k-2$.
More generally, for\/ $0\leq i\leq i+m\leq h-1$, the subposet\/ $\Dn(i,m)$ of\/ $P$ defined by
\[\Dn(i,m)=\bigcup_{\alpha=i}^{i+m}\Dn(\alpha)\cup\{c_{i+1},\ldots,c_{i+m}\}\]
is convex and has height at most\/ $m+2k-2$.

Dually, for\/ $2\leq j\leq h+1$, the subposet\/ $\Up(j)$ of\/ $P$ is convex and has height at most\/ $2k-2$.
More generally, for\/ $2\leq j\leq j+m\leq h+1$, the subposet\/ $\Up(j,m)$ of\/ $P$ defined by
\[\Up(j,m)=\bigcup_{\alpha=j}^{j+m}\Up(\alpha)\cup\{c_j,\ldots,c_{j+m-1}\}\]
is convex and has height at most\/ $m+2k-2$.
\end{lemma}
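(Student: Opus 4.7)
The plan is to analyze $\Dn(i,m)$ uniformly by extending the functions $\dn$ and $\up$ from $P-C$ to all of $P$ by setting $\dn^*(c_\ell)=\ell-1$ and $\up^*(c_\ell)=\ell+1$ on the chain $C$. A short case check (splitting on whether each endpoint of a covering comparability lies in $C$ or $P-C$, and using transitivity with witnesses in $C$) shows that both $\dn^*$ and $\up^*$ are monotone nondecreasing along every chain of $P$. Moreover, for $w\in P-C$ and any index $\ell\in\{1,\ldots,h\}$, the point $w$ is incomparable with $c_\ell$ in $P$ if and only if $\dn^*(w)<\ell<\up^*(w)$.

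Convexity of $\Dn(i,m)$ is then essentially immediate: if $x<y<z$ in $P$ with $x,z\in\Dn(i,m)$, monotonicity puts $\dn^*(y)$ into $[\dn^*(x),\dn^*(z)]\subseteq[i,i+m]$. If $y\in P-C$ this already gives $y\in\Dn(i,m)$, and if $y=c_\ell$ then $\ell\in[i+1,i+m+1]$, with $\ell=i+m+1$ excluded because $c_{i+m+1}<z$ would force $\dn^*(z)\geq i+m+1$, contradicting $z\in\Dn(i,m)$.

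For the height bound, take a chain $w_1<\cdots<w_t$ in $\Dn(i,m)$ and let $j^*$ be the largest index with $\up^*(w_{j^*})\leq i+m+k$ (or $0$ if no such index exists). For $j>j^*$, monotonicity of $\up^*$ rules out $w_j\in C$ (since $\up^*$ on $\Dn(i,m)\cap C$ is at most $i+m+1$) and forces $\up^*(w_j)\geq i+m+k+1$; in particular the $k$ points $c_{i+m+1},\ldots,c_{i+m+k}$ exist and, by the incomparability criterion above, each is incomparable in $P$ with every such $w_j$. Consequently, if $t-j^*\geq k$, any $k$ of these $w_j$'s together with these $k$ $c_\ell$'s form a copy of $\bfk+\bfk$ in $P$, contradicting the standing hypothesis; hence $t-j^*\leq k-1$.

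It remains to bound $j^*$. Writing $\beta=\up^*(w_{j^*})\leq i+m+k$, the elements
\[c_1<\cdots<c_i<w_1<\cdots<w_{j^*}<c_\beta<\cdots<c_h\]
(trimming the initial segment when $i=0$ and the trailing segment when $\beta=h+1$) form a chain in $P$, with all elements distinct since any $w_j$ that lies in $C$ has its index in the open interval $(i,\beta)$. Since this chain has length at most $h$, a one-line count gives $j^*\leq m+k-1$ in every case, so $t\leq j^*+(k-1)\leq m+2k-2$. Setting $m=0$ recovers the bound for $\Dn(i)$, and the statements for $\Up(j)$ and $\Up(j,m)$ follow by applying the same argument to the dual poset. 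The main obstacle is really just the bookkeeping of three boundary cases ($i=0$, $\beta=h+1$, and some $w_j\in C$); the conceptual core is that the $c_\ell$'s just beyond the ``window'' $[i,i+m]$ provide the second chain of a $\bfk+\bfk$ unless the chain in $\Dn(i,m)$ is squeezed down into that window, in which case the height of $P$ itself bounds its length.
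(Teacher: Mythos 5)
Your proof is correct and follows essentially the same route as the paper: you split a chain of $\Dn(i,m)$ according to whether $\up\leq i+m+k$, bound the lower part by $m+k-1$ using maximality of $C$ (padding with $c_1,\ldots,c_i$ and $c_\beta,\ldots,c_h$), and bound the upper part by $k-1$ since its points are incomparable with the $k$-chain $c_{i+m+1}<\cdots<c_{i+m+k}$, exactly the paper's decomposition into $Q$ and $\Dn(i,m)-Q$. The only additions are cosmetic: the explicit $\dn^*,\up^*$ bookkeeping for the boundary cases and for convexity, which the paper dismisses as clear.
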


\begin{proof}
We only show the proof of the first part, as the second is dual.
It is clear that the subposet $\Dn(i,m)$ is convex.
The fact that $C$ is a maximum chain implies that the height of $\Dn(i,m)$ is at most $h-i$, so the desired inequality follows if $h-i\leq m+2k-2$.
Suppose $h\geq i+m+2k-1\geq i+m+k$.
Let $Q=\{z\in\Dn(i,m)\colon\up(z)\leq c_{i+m+k}\}$.
The fact that $C$ is a maximum chain forces the height of $Q$ to be at most $m+k-1$.
Furthermore, the height of the subposet $\Dn(i,m)-Q$ is at most $k-1$, because all points of $\Dn(i,m)-Q$ are incomparable with the $k$-element chain $\{c_{i+m+1}<\cdots<c_{i+m+k}\}$.
Hence the height of $\Dn(i,m)$ is at most $(m+k-1)+(k-1)=m+2k-2$.
\end{proof}

For $0\leq i\leq h$, define $\Ar(i,i+1)=\{z\in P-C\colon\dn(z)\leq i$ and $\up(z)\geq i+1\}$.
Here, $\Ar$ stands for ``around''.

\begin{lemma}
\label{lem:Ar(i,i+1)}
For\/ $0\leq i\leq h$, the subposet\/ $\Ar(i,i+1)$ of\/ $P$ is convex and has height at most\/ $4k-4$.
\end{lemma}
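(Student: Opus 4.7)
My plan is to verify convexity directly from the monotonicity of $\dn$ and $\up$ with respect to the order, and then to bound the height of a chain in $\Ar(i,i+1)$ by partitioning it into pieces whose sizes are controlled by $\bfk+\bfk$-exclusion and by maximality of $C$.

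For convexity, suppose $x<y<z$ in $P$ with $x,z\in\Ar(i,i+1)$. If $y=c_\ell$, then $z>c_\ell$ forces $\ell\leq\dn(z)\leq i$ while $x<c_\ell$ forces $\ell\geq\up(x)\geq i+1$, a contradiction; so $y\notin C$. The inequalities $\dn(y)\leq\dn(z)\leq i$ and $\up(y)\geq\up(x)\geq i+1$ (both immediate from $x<y<z$) then place $y$ in $\Ar(i,i+1)$.

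For the height, I take a chain $D=z_1<\cdots<z_t$ in $\Ar(i,i+1)$ and use that $\dn$ and $\up$ are both non-decreasing along $D$, so that
\[D_L=\{z\in D\colon\dn(z)\leq i-k\}\quad\text{and}\quad D_R=\{z\in D\colon\up(z)\geq i+k+1\}\]
are a prefix and a suffix of $D$. The three pieces $D_L\setminus D_R$, $D_R\setminus D_L$, and $D_L\cap D_R$ are all controlled by $\bfk+\bfk$-exclusion: a point of $D_L\setminus D_R$ is incomparable with each of $c_{i-k+1},\ldots,c_i$, a point of $D_R\setminus D_L$ is incomparable with each of $c_{i+1},\ldots,c_{i+k}$, and a point of $D_L\cap D_R$ is incomparable with each of $c_{i-k+1},\ldots,c_{i+k}$; in every case, more than $k-1$ such points together with an appropriate $k$-subchain of $C$ would form $\bfk+\bfk$. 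For the middle piece $D_M=D\setminus(D_L\cup D_R)$, every $z\in D_M$ satisfies $c_{i-k+1}<z<c_{i+k}$, so inserting $D_M$ between $c_1<\cdots<c_{i-k+1}$ and $c_{i+k}<\cdots<c_h$ yields a chain of $P$ of length $\lvert D_M\rvert+h-2k+2$, and maximality of $C$ then forces $\lvert D_M\rvert\leq 2k-2$.

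Because $D_L$ is a prefix and $D_R$ a suffix, either they overlap (so $D_L\cup D_R=D$, $D_M=\emptyset$, and $\lvert D\rvert\leq 3(k-1)$) or they are disjoint and $D_M$ fills any gap between them (so $\lvert D\rvert\leq(k-1)+(2k-2)+(k-1)=4k-4$); either way $\lvert D\rvert\leq 4k-4$. The step I expect to require the most care is the boundary bookkeeping: when $i<k$ the chain $c_{i-k+1},\ldots,c_i$ does not exist, but then $\dn(z)\leq i-k$ is impossible so $D_L=\emptyset$, and the argument for $D_M$ degenerates to the slightly weaker chain bound $\lvert D_M\rvert\leq i+k-1\leq 2k-2$; the case $i>h-k$ is dual, and if both degeneracies occur then $h\leq 2k-1$ so every chain of $P$ has at most $h\leq 4k-4$ points (valid for all $k\geq 2$).
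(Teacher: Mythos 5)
Your proof is correct. It uses the same two ingredients as the paper's argument---points incomparable with a $k$-element subchain of $C$ form a chain of size at most $k-1$ by $\bfk+\bfk$-exclusion, and points sandwiched between two elements of $C$ are controlled by the maximality of $C$---but it is organized differently: you argue directly on a chain $D$ in $\Ar(i,i+1)$ via the symmetric prefix/middle/suffix split $D_L$, $D_M$, $D_R$ (using monotonicity of $\dn$ and $\up$ along $D$), with bounds $k-1$, $2k-2$, $k-1$. The paper instead makes an asymmetric two-part split, taking $Q=\{z\in\Ar(i,i+1)\colon\up(z)\leq i+k\}$, observing $Q\subseteq\Up(i+1,k-1)$ so that Lemma~\ref{lem:Dn_iUp_j} gives height at most $3k-3$, and bounding the complement by $k-1$ via incomparability with $c_{i+1}<\cdots<c_{i+k}$; unrolling that appeal to Lemma~\ref{lem:Dn_iUp_j} recovers essentially your three-part decomposition. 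What the paper's route buys is brevity and no boundary bookkeeping, since the degenerate index ranges are absorbed into the earlier lemma; what yours buys is a self-contained proof of this lemma (you never invoke Lemma~\ref{lem:Dn_iUp_j}), at the cost of handling the cases $i<k$ and $i>h-k$ explicitly, which you do correctly (indeed, when both occur one even has $h\leq 2k-2$, slightly better than your stated $h\leq 2k-1$, though either suffices). Your explicit convexity verification is also fine; the paper treats it as immediate.
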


\begin{proof}
It is clear that the subposet $\Ar(i,i+1)$ is convex.
Let $Q=\{z\in\Ar(i,i+1)\colon\up(z)\leq i+k\}$.
Thus $Q\subseteq\Up(i+1)\cup\cdots\cup\Up(i+k)\subseteq\Up(i+1,k-1)$.
It follows from Lemma~\ref{lem:Dn_iUp_j} that the height of $Q$ is at most $(k-1)+2k-2=3k-3$.
Furthermore, the height of the subposet $\Ar(i,i+1)-Q$ is at most $k-1$, because all points of $\Ar(i,i+1)-Q$ are incomparable with the $k$-element chain $\{c_{i+1}<\cdots<c_{i+k}\}$.
Hence the height of $\Ar(i,i+1)$ is at most $(3k-3)+(k-1)=4k-4$.
\end{proof}

\begin{lemma}
\label{lem:h(Q)}
Let\/ $z,w\in P-C$, $\dn(z)<\up(w)$, and\/ $w\leq z$ in\/ $P$.
If\/ $C'$ is a chain in\/ $P-C$ with\/ $w$ the least element and\/ $z$ the greatest element, then\/ $|C'|\leq 4k-4$.
\end{lemma}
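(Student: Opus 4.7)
The plan is to recognize that every point of $C'$ sits inside one of the convex ``around'' subposets $\Ar(i,i+1)$ introduced just above the statement, and then invoke Lemma~\ref{lem:Ar(i,i+1)} to obtain the bound on chain length. Concretely, I would set $i=\dn(z)$ and observe that the hypothesis $\dn(z)<\up(w)$ reads as $i+1\leq\up(w)$.

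The core step is a monotonicity observation for $\dn$ and $\up$ along the order of $P$. First I would argue that any $x\in C'$, which by assumption satisfies $w\leq x\leq z$ in $P$ with $x\in P-C$, has $\dn(x)\leq\dn(z)=i$: indeed, if $c_\alpha<x$ then $c_\alpha<z$, so the largest such $\alpha$ for $x$ is at most the largest such $\alpha$ for $z$ (the boundary case $\dn(x)=0$ is covered trivially). Dually, $\up(x)\geq\up(w)\geq i+1$, because any $c_\beta>x$ yields $c_\beta>w$, so the least such $\beta$ for $x$ is at least that for $w$. Combining these, every $x\in C'$ lies in $\Ar(i,i+1)=\{z\in P-C\colon\dn(z)\leq i\text{ and }\up(z)\geq i+1\}$.

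Once this inclusion $C'\subseteq\Ar(i,i+1)$ is established, the bound follows immediately: Lemma~\ref{lem:Ar(i,i+1)} tells us that $\Ar(i,i+1)$ has height at most $4k-4$, so any chain contained in it, in particular $C'$, has at most $4k-4$ elements.

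There is no real obstacle here; the only thing to be slightly careful about is handling the boundary cases $\dn(x)=0$ and $\up(x)=h+1$ in the monotonicity step, and keeping track that the points of $C'$ lie in $P-C$ (which is given), so that the definitions of $\dn$ and $\up$ apply to them. All the nontrivial work has already been done inside Lemmas~\ref{lem:Dn_iUp_j} and~\ref{lem:Ar(i,i+1)}.
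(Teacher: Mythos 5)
Your proof is correct and follows essentially the same route as the paper: both establish the inclusion $C'\subseteq\Ar(\dn(z),\dn(z)+1)$ and then invoke Lemma~\ref{lem:Ar(i,i+1)}. The only (immaterial) difference is that the paper deduces the inclusion from $w,z\in\Ar(\dn(z),\dn(z)+1)$ together with the convexity of $\Ar(\dn(z),\dn(z)+1)$, whereas you verify it directly via the monotonicity of $\dn$ and $\up$.
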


\begin{proof}
We have $z,w\in\Ar(\dn(z),\dn(z)+1)$, which implies $C'\subseteq\Ar(\dn(z),\dn(z)+1)$.
We apply Lemma~\ref{lem:Ar(i,i+1)} to conclude that $|C'|\leq 4k-4$.
\end{proof}

Every incomparable pair $(x,y)$ of $P$ satisfies $\dn(y)<\up(x)$.
We call an incomparable pair $(x,y)$ of $P$ \emph{dangerous} if $\dn(x)<\dn(y)<\up(x)<\up(y)$ and \emph{safe} otherwise.

\begin{lemma}
\label{lem:safe}
If\/ $d_0$ is a positive integer such that every convex subposet\/ $Q$ of\/ $P$ of height at most\/ $2k-2$ satisfies\/ $\dim(Q)\leq d_0$, then there is a set of at most\/ $2d_0$ linear extensions of\/ $P$ that reverses all the safe incomparable pairs of\/ $P$.
\end{lemma}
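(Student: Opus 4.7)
The plan is to split the safe pairs into two classes by whether $\dn(x)\geq\dn(y)$ or $\up(x)\geq\up(y)$ holds, and handle each class with a family of $d_0$ linear extensions of $P$ built from realizers of the convex subposets $\Dn(i)$ and $\Up(j)$, which have height at most $2k-2$ by Lemma~\ref{lem:Dn_iUp_j}. The preliminary observation is that every incomparable pair $(x,y)$ of $P$ satisfies $\dn(y)<\up(x)$, since otherwise $y>c_{\dn(y)}\geq c_{\up(x)}>x$. Hence the negation of the dangerous condition $\dn(x)<\dn(y)<\up(x)<\up(y)$ collapses to the disjunction just described, and these two classes together cover every safe pair.

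For the first class, I would pick a realizer $\{L_i^1,\ldots,L_i^{d_0}\}$ of $\Dn(i)$ for each $i\in\{0,\ldots,h-1\}$, and for each $m\in\{1,\ldots,d_0\}$ define $L_m^{\mathrm{dn}}$ to be the order on $P$ obtained by listing, in sequence, $\Dn(0)$ according to $L_0^m$, then $c_1$, then $\Dn(1)$ according to $L_1^m$, then $c_2$, and so on, ending with $c_h$. Checking that $L_m^{\mathrm{dn}}$ is a linear extension of $P$ is a brief case analysis: for $z\in\Dn(i)$, the definition of $\dn$ forces $c_a<z$ in $P$ to imply $a\leq i$, $z<c_b$ in $P$ to imply $b\geq i+1$, and $z<w$ in $P$ with $w\in\Dn(j)$ to imply $i\leq j$, the equality case $i=j$ being handled by $L_i^m$ inside the block. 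A symmetric construction, inserting each $\Up(j)$ between $c_{j-1}$ and $c_j$ (with $\Up(h+1)$ placed after $c_h$), and using realizers of each $\Up(j)$ with $2\leq j\leq h+1$, yields $d_0$ further linear extensions $L_m^{\mathrm{up}}$.

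To conclude, I would verify reversal of every safe pair. If $\dn(x)>\dn(y)$, each $L_m^{\mathrm{dn}}$ reverses $(x,y)$ because the block containing $x$ comes strictly after the block containing $y$. If $\dn(x)=\dn(y)=i$, both points lie in $\Dn(i)$ and some $L_i^m$ reverses $(x,y)$, so the corresponding $L_m^{\mathrm{dn}}$ does as well. The case $\up(x)\geq\up(y)$ is dispatched symmetrically by the family $L_m^{\mathrm{up}}$, giving the required $2d_0$ linear extensions in total. The only nontrivial step is the admissibility verification for the concatenated orders, which is entirely mechanical once the block structure is fixed; no additional structural hypothesis beyond the height bound on $\Dn(i)$ and $\Up(j)$ from Lemma~\ref{lem:Dn_iUp_j} is used.
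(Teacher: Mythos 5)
Your construction is exactly the paper's: the same two families of block-form linear extensions, $\Dn(0)<c_1<\Dn(1)<\cdots<c_h$ and $c_1<\Up(2)<\cdots<c_h<\Up(h+1)$, with realizers of the convex subposets $\Dn(i)$ and $\Up(j)$ inserted inside the blocks, and the verification of safety via the disjunction $\dn(x)\geq\dn(y)$ or $\up(x)\geq\up(y)$ matches the paper's block-form argument. The one small omission is that incomparable pairs with $x\in C$ or $y\in C$ (for which $\dn$ and $\up$ are not defined, so the pairs are safe by default and must be reversed here, as the later sections rely on this) fall outside your case split; fortunately your own block orders handle them --- a pair $(c_i,y)$ is reversed by every extension of the $\Dn$-family since $y\in\Dn(\dn(y))$ precedes $c_{\dn(y)+1}\leq c_i$, and a pair $(x,c_j)$ is reversed by every extension of the $\Up$-family since $x\in\Up(\up(x))$ follows $c_{\up(x)-1}\geq c_j$ --- so you should add this one-line check to make the proof complete.
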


\begin{proof}
It follows from Lemma~\ref{lem:Dn_iUp_j} that $\dim(\Dn(i))\leq d_0$ for $0\leq i\leq h-1$ and $\dim(\Up(j))\leq d_0$ for $2\leq j\leq h+1$.
First, consider $d_0$ linear extensions of $P$ that
\begin{itemize}
\item have block form $\Dn(0)<c_1<\Dn(1)<\cdots<c_{h-1}<\Dn(h-1)<c_h$,
\item induce $d_0$ linear extensions of $\Dn(i)$ witnessing $\dim(\Dn(i))\leq d_0$, for $0\leq i\leq h-1$.
\end{itemize}
By the first condition, these linear extensions reverse all incomparable pairs $(x,y)$ of $P$ such that $\dn(x)>\dn(y)$, and by the second condition---all incomparable pairs $(x,y)$ of $P$ such that $\dn(x)=\dn(y)$.
Then, consider $d_0$ more linear extensions of $P$ that
\begin{itemize}
\item have block form $c_1<\Up(2)<c_2<\cdots<\Up(h)<c_h<\Up(h+1)$,
\item induce $d_0$ linear extensions of $\Up(i)$ witnessing $\dim(\Up(i))\leq d_0$, for $2\leq i\leq h+1$.
\end{itemize}
By the first condition, these linear extensions reverse all incomparable pairs $(x,y)$ of $P$ such that $\up(x)>\up(y)$, and by the second condition---all incomparable pairs $(x,y)$ of $P$ such that $\up(x)=\up(y)$.
We conclude that an incomparable pair $(x,y)$ of $P$ is reversed by some of the $2d_0$ linear extensions unless $\dn(x)<\dn(y)<\up(x)<\up(y)$, that is, the pair $(x,y)$ is dangerous.
\end{proof}

In view of Lemma~\ref{lem:safe}, we can focus on reversing only the dangerous incomparable pairs when attempting for a bound on $\dim(P)$.
This is the starting point of the proof of Theorem~\ref{thm:two-chains} in the next sections.
We conclude this section with two results that will not be used further in the paper: one asserting that $\dim(P)=O(h)$ whenever the convex subposets of $P$ with bounded height have bounded dimension, and the other asserting that linear dependence on $h$ is necessary.

\begin{proposition}
If\/ $d_1$ is a positive integer such that every convex subposet\/ $Q$ of\/ $P$ of height at most\/ $4k-4$ satisfies\/ $\dim(Q)\leq d_1$, then\/ $\dim(P)\leq(h+1)d_1$.
\end{proposition}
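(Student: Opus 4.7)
The plan is to reverse $\Inc(P)$ by combining realizers of the $h+1$ ``around'' subposets. By Lemma~\ref{lem:Ar(i,i+1)}, for each $i\in\{0,\ldots,h\}$ the subposet $\Ar(i,i+1)$ is convex in $P$ and has height at most $4k-4$, so the hypothesis gives $\dim(\Ar(i,i+1))\leq d_1$; fix a realizer $\{L_{i,1},\ldots,L_{i,d_1}\}$ of $\Ar(i,i+1)$. I will extend each $L_{i,j}$ to a linear extension $\tilde L_{i,j}$ of $P$ and argue that the resulting $(h+1)d_1$ extensions form a realizer of $P$.

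For each $i$, partition $P$ into three blocks
\[
A_i=\{z\in P-C\colon\up(z)\leq i\}\cup\{c_1,\ldots,c_i\},
\]
\[
B_i=\Ar(i,i+1),
\]
\[
C_i=\{z\in P-C\colon\dn(z)\geq i+1\}\cup\{c_{i+1},\ldots,c_h\};
\]
the trichotomy ``$\up(z)\leq i$'', ``$\dn(z)\leq i$ and $\up(z)\geq i+1$'', or ``$\dn(z)\geq i+1$'' partitions $P-C$, so this really is a partition of $P$. Define $\tilde L_{i,j}$ to be a linear extension of $P$ with block form $A_i<B_i<C_i$, where $B_i$ is ordered by $L_{i,j}$ and $A_i,C_i$ are each ordered by an arbitrary linear extension of the induced subposet. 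A short case analysis using the implications $z>c_\alpha\Rightarrow\dn(z)\geq\alpha$ and $z<c_\alpha\Rightarrow\up(z)\leq\alpha$ shows that no element of $A_i$ lies above an element of $B_i\cup C_i$ in $P$, and no element of $B_i$ lies above an element of $C_i$ in $P$, so $\tilde L_{i,j}$ really is a linear extension of $P$.

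It remains to verify that every $(x,y)\in\Inc(P)$ is reversed by some $\tilde L_{i,j}$. If both $x,y\in P-C$, then incomparability forces $\dn(x)<\up(y)$ and $\dn(y)<\up(x)$ (else $C$ would witness a comparability), so the integer interval $[\max(\dn(x),\dn(y)),\min(\up(x),\up(y))-1]$ is nonempty, and for any $i$ in it both $x$ and $y$ lie in $B_i$; since $\{L_{i,j}\}_j$ is a realizer of $B_i$, some $\tilde L_{i,j}$ reverses $(x,y)$. If $y=c_\beta\in C$ and $x\in P-C$, then incomparability forces $\dn(x)<\beta<\up(x)$, so $x\in B_\beta$ and $c_\beta\in A_\beta$, giving $x>c_\beta$ in every $\tilde L_{\beta,j}$; symmetrically, $(c_\beta,x)$ is reversed by each $\tilde L_{\beta-1,j}$. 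This establishes $\dim(P)\leq(h+1)d_1$. The only delicate step is the block-structure check in the second paragraph; once the $(\dn,\up)$-trichotomy of $P-C$ is isolated and chain elements are assigned to $A_i$ or $C_i$ by their index, everything else is bookkeeping.
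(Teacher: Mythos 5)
Your proof is correct, and it takes a genuinely different route from the paper's. The paper first invokes Lemma~\ref{lem:safe} to reverse all \emph{safe} pairs with $2d_1$ linear extensions (the $\Dn/\Up$ block-form construction), and then uses the regions $\Ar(i,i+1)$ only for the \emph{dangerous} pairs, with $1\leq i\leq h-1$, for a total of $2d_1+(h-1)d_1=(h+1)d_1$. You instead bypass the safe/dangerous dichotomy entirely: for every $i\in\{0,\ldots,h\}$ you build $d_1$ linear extensions of $P$ with block form $A_i<\Ar(i,i+1)<C_i$, and you reverse all of $\Inc(P)$ uniformly, including pairs involving points of $C$, which the paper handles inside Lemma~\ref{lem:safe}. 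Your two key checks are sound: the cross-block comparability analysis (any comparability running against the block order would force $\up$ or $\dn$ across the cut, contradicting $\dn(z)<\up(z)$), and the observation that for incomparable $x,y\in P-C$ the interval $[\max(\dn(x),\dn(y)),\min(\up(x),\up(y))-1]$ is nonempty, i.e.\ every incomparable pair lies in a common $\Ar(i,i+1)$; the pairs $(x,c_\beta)$ and $(c_\beta,x)$ are then reversed by the block structure at $i=\beta$ and $i=\beta-1$ respectively. What your approach buys is self-containment: it needs only Lemma~\ref{lem:Ar(i,i+1)} and the hypothesis applied to the $h+1$ subposets $\Ar(i,i+1)$, with a single uniform construction. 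What the paper's approach buys is reuse: Lemma~\ref{lem:safe} is machinery needed anyway for the main theorem, the safe pairs only require the hypothesis at height $2k-2$, and the $\Ar$ regions are invoked only for the $h-1$ interior indices. Both arguments yield exactly the bound $(h+1)d_1$.
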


\begin{proof}
We apply Lemma~\ref{lem:safe} to reverse all the safe incomparable pairs of $P$ using at most $2d_1$ linear extensions.
For every dangerous incomparable pair $(x,y)$ of $P$, we have $x,y\in\Ar(\dn(y),\dn(y)+1)$, where $1\leq\dn(y)\leq h-1$.
For $1\leq i\leq h-1$, it follows from Lemma~\ref{lem:Ar(i,i+1)} that $\dim(\Ar(i,i+1))\leq d_1$, and we can extend any linear extension of $\Ar(i,i+1)$ witnessing the dimension to a linear extension of $P$.
This way, we obtain a set of at most $(h-1)d_1$ linear extensions of $P$ reversing all the dangerous incomparable pairs of $P$, and the proof is complete.
\end{proof}

\begin{proposition}
For every positive integer\/ $n$, there is a poset\/ $P$ excluding\/ $\bfthree+\bfthree$ such that\/ $\dim(P)\geq n$, $h(P)=n+1$, and every convex subposet\/ $Q$ of\/ $P$ satisfies\/ $\dim(Q)\leq h(Q)+1$, where\/ $h(Q)$ denotes the height of\/ $Q$.
\end{proposition}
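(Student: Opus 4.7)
The plan is to exhibit, for each $n\geq 2$, an explicit poset $P_n$ with the required properties. Let $P_n$ consist of a chain $c_1<c_2<\cdots<c_{n-1}$, pairwise incomparable minimal elements $x_1,\ldots,x_n$, and pairwise incomparable maximal elements $y_1,\ldots,y_n$, subject to the defining comparabilities $x_i<c_k$ iff $k\geq i$, $c_k<y_j$ iff $k<j$, and additionally $x_i<y_j$ whenever $i>j$ (the relations $x_i<y_j$ for $i<j$ then follow by transitivity through $c_i$). The induced order on $\{x_i,y_j:1\leq i,j\leq n\}$ is exactly $x_i<y_j\Leftrightarrow i\neq j$, i.e., the standard example $S_n$, so $\dim(P_n)\geq n$; the chain $x_1<c_1<\cdots<c_{n-1}<y_n$ witnesses $h(P_n)=n+1$.

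To show $P_n$ excludes $\bfthree+\bfthree$, I observe that every 3-chain of $P_n$ must contain at least one spine element $c_k$: all $x_i$'s are minimal and all $y_j$'s are maximal, so within $\{x_i,y_j\}$ no chain of length $3$ exists. If $C_1,C_2$ were two disjoint incomparable 3-chains, each would contain some $c_{k_\alpha}$; but with $k_1\neq k_2$ these two spine elements are comparable, witnessing that some element of $C_1$ is comparable with some element of $C_2$.

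For the main bound $\dim(Q)\leq h(Q)+1$ on convex subposets, let $T=\{t:x_t,y_t\in Q\}$ and $r=|T|$. When $r\leq 1$, the only potential ``standard-example'' incomparability among the $x,y$-elements is the single pair $(x_t,y_t)$, and a direct check gives $\dim(Q)\leq 3\leq h(Q)+1$ by a bipartite-plus-chain realizer. When $r\geq 2$, writing $T=\{t_1<\cdots<t_r\}$, convexity applied to $x_{t_1}<y_{t_r}$ together with its witness path $x_{t_1}<c_{t_1}<\cdots<c_{t_r-1}<y_{t_r}$ forces $c_{t_1},\ldots,c_{t_r-1}\in Q$, so $h(Q)\geq r+1$. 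I then construct a realizer of $Q$ of size at most $r+2$: first take $r$ linear extensions modelled on the standard realizer of $S_r$ so that extension $L_s$ reverses the incomparable pair $(x_{t_s},y_{t_s})$, inserting the chain elements and the auxiliary $x$'s and $y$'s in positions compatible with the order; then adjoin two ``sweep'' extensions $M^\uparrow,M^\downarrow$ that traverse the $x$'s, $c$'s, and $y$'s in increasing and decreasing spine-index order. The two sweeps collectively reverse all remaining incomparable pairs of $Q$—namely those among the $x$'s, among the $y$'s, the ``staircase'' pairs $(x_i,c_k)$ with $k<i$, and the pairs $(y_j,c_k)$ with $k\geq j$. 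This gives $\dim(Q)\leq r+2\leq h(Q)+1$.

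The main obstacle is the last step: verifying that only $r+2$ extensions suffice, i.e., arranging the $r$ ``reversing'' extensions together with the two sweeps so that every incomparable pair of $Q$ gets reversed in both directions. This hinges on (i) the fact that the chain $c_1<\cdots<c_{n-1}$ provides a global linear order along which all the auxiliary incomparabilities can be oriented monotonically, allowing two sweep extensions to dispose of them simultaneously, and (ii) the convexity of $Q$, which, via the forced chain elements, prevents any ``excess'' incomparability beyond the $r$ pairs $(x_{t_s},y_{t_s})$ from inflating the dimension further.
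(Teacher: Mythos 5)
Your poset is, up to deleting the two extreme spine points, exactly the example in the paper, and the easy parts are fine: it contains $S_n$, every $3$-element chain meets the spine so $\bfthree+\bfthree$ is excluded, and your convexity observation that $x_{t_1},y_{t_r}\in Q$ forces $c_{t_1},\dots,c_{t_r-1}\in Q$, hence $h(Q)\geq r+1$, is correct (and is a reasonable substitute for the paper's reduction to maximal convex subposets). But the heart of the proposition is precisely the step you defer: exhibiting the $r+2$ linear extensions of $Q$ and checking that they reverse every incomparable pair. This is asserted, not proved, and your description of the two sweeps --- ``traverse the $x$'s, $c$'s, and $y$'s in increasing and decreasing spine-index order'' --- does not define two linear extensions that do the job. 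The tension is this: to reverse all pairs $(x_i,c_k)$ with $k<i$ in a single extension you must place each $x_i$ immediately below $c_i$ (interleaved high), and to reverse all pairs $(c_k,y_j)$ with $k\geq j$ in a single extension you must place each $y_j$ immediately above $c_{j-1}$ (interleaved low); but these two placements cannot coexist in one linear extension, because for $i>j$ they would put $x_i$ above $c_j$ and $y_j$ below $c_j$, hence $x_i$ above $y_j$, contradicting $x_i<y_j$ in $P$. So a sweep treating the $x$'s and $y$'s symmetrically ``in spine order'' is not available, and the claim that two sweeps handle all remaining pairs genuinely needs a construction.

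The gap is fixable, and the fix is essentially the paper's explicit realizer: pair the placements the other way, taking one sweep that pushes everything up (each $x_i$ immediately below $c_i$, all $y$'s on top in decreasing index order) and one that pushes everything down (all $x$'s at the bottom in decreasing index order, each $y_j$ immediately above $c_{j-1}$). Both are linear extensions of $P_n$, and their restrictions to $Q$, together with one extension reversing each pair $(x_{t_s},y_{t_s})$, reverse every incomparable pair of $Q$, giving $\dim(Q)\leq r+2\leq h(Q)+1$ when $r\geq 2$. (For $r\leq 1$ you also need the separate remark that a convex subposet of height $1$ is an antichain, so its dimension is at most $2$; your inequality $3\leq h(Q)+1$ fails there.) The paper carries this out concretely: it reduces without loss of generality to maximal convex subposets of height $h$, identifies them as consisting of $a_{i+1},\dots,a_n,b_1,\dots,b_{i+h-1},c_i,\dots,c_{i+h-1}$, and writes down $h+1$ extensions whose first and last are precisely these two sweeps and whose $h-1$ middle ones each reverse one pair $(a_{i+j},b_{i+j})$. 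Until your sweeps are specified and verified in this way, the argument has a real hole at the step you yourself call the main obstacle.
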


\begin{proof}
The poset $P$ consists of points $a_1,\ldots,a_n,b_1,\ldots,b_n,c_0,\ldots,c_n$ with the following cover relations: $c_0<\cdots<c_n$, $a_i<c_i$ and $c_{i-1}<b_i$ for $1\leq i\leq n$, and $a_i<b_j$ for $1\leq j<i\leq n$.
Every chain in $P$ of size at least $3$ contains a point of the form $c_i$, so $P$ excludes $\bfthree+\bfthree$.
The subposet of $P$ induced on $a_1,\ldots,a_n,b_1,\ldots,b_n$ is isomorphic to the standard example $S_n$, so $\dim(P)\geq n$.
Every convex subposet of $P$ of height $1$ is an antichain and therefore has dimension at most $2$.
Now, let $h\geq 2$, and let $Q$ be a convex subposet of $P$ of height $h$.
To prove $\dim(Q)\leq h+1$, we can assume without loss of generality that $Q$ is a maximal subposet of $P$ of height $h$.
It follows that $Q$ is comprised of points $a_{i+1},\ldots,a_n,b_1,\ldots,b_{i+h-1},c_i,\ldots,c_{i+h-1}$ for some $i\in\{0,\ldots,n-h+1\}$.
It is easy to check that the following $h+1$ linear extensions of $Q$ form a realizer of $Q$:
\begin{align*}
&a_{i+1}<\cdots<a_n<b_i<\cdots<b_1<c_i<b_{i+1}<c_{i+1}<\cdots<b_{i+h-1}<c_{i+h-1},\\
&c_i<a_{i+1}<c_{i+1}<\cdots<a_{i+j-1}<c_{i+j-1}<a_{i+j+1}<\cdots<a_n<b_{i+j}<a_{i+j}\\
&\hphantom{c_i}<b_1<\cdots<b_{i+j-1}<c_{i+j}<b_{i+j+1}<c_{i+j+1}<\cdots<b_{i+h-1}<c_{i+h-1}\quad\text{for }1\leq j<h,\\
&c_i<a_{i+1}<c_{i+1}<\cdots<a_{i+h-1}<c_{i+h-1}<a_n<\cdots<a_{i+h}<b_1<\cdots<b_{i+1}.\qedhere
\end{align*}
\end{proof}

\section{Proof of the main theorem}

For the proof of Theorem~\ref{thm:two-chains}, we fix a poset $P$ that excludes $\bfk+\bfk$, where $k\geq 2$, and has a planar cover graph, and we attempt to partition the set $\Inc(P)$ of incomparable pairs of $P$ into a bounded number of reversible subsets, where the bound depends only on $k$.
Following the notation and terminology of the preceding section, let $h$ be the height of $P$, and let $C=\{c_1<\cdots<c_h\}$ be a chain in $P$ of size $h$.
We use operators $\Dn$, $\Up$, and $\Ar$ as in the preceding section to denote appropriate convex subposets of $P-C$.
We use operators $\dn$ and $\up$ in a different way than in the preceding section, namely, to refer to points of the chain $C$ rather than integer numbers:
\begin{itemize}
\item for $z\in P$, we let $\dn(z)=c_i$ if $c_i\leq z$ and $i$ is greatest in $\{1,\ldots,h\}$ with this property;
\item for $z\in P$, we let $\up(z)=c_i$ if $z\leq c_i$ and $i$ is least in $\{1,\ldots,h\}$ with this property.
\end{itemize}
We can write $\dn(z)$ or $\up(z)$ only for points $z\in P$ for which the respective point in $C$ exists.

By Theorem~\ref{thm:streib-trotter}, there is an integer $d_1$ such that every poset $P'$ with a planar cover graph and with height at most $4k-4$ satisfies $\dim(P')\leq d_1$.
It follows that every convex subposet of $P$ of height at most $4k-4$ has dimension at most $d_1$.
For $1\leq i\leq h-1$, in particular, Lemma~\ref{lem:Ar(i,i+1)} yields $\dim(\Ar(i,i+1))\leq d_1$, so there is a coloring $\phi_i\colon\Inc(\Ar(i,i+1))\to\{1,\ldots,d_1\}$ such that for each color $\gamma\in\{1,\ldots,d_1\}$, the set of incomparable pairs of $\Ar(i,i+1)$ that are assigned color $\gamma$ by $\phi_i$ is reversible.
We fix the integer $d_1$ and the colorings $\phi_i$ for $1\leq i\leq h-1$ for this and the following sections.

By Lemma~\ref{lem:safe}, we can reverse all the safe incomparable pairs of $P$ using at most $2d_1$ linear extensions, and the remaining challenge is to reverse the dangerous incomparable pairs of $P$.
For every dangerous incomparable pair $(a,b)$ of $P$, we have $c_1<b$ and $a<c_h$ in $P$, so the points $\dn(b)$ and $\up(a)$ of the chain $C$ are defined.
For any set $S$ of dangerous incomparable pairs of $P$, let $A(S)$ denote the set of points $a\in P$ for which there is a point $b\in P$ with $(a,b)\in S$, and let $B(S)$ denote the set of points $b\in P$ for which there is a point $a\in P$ with $(a,b)\in S$.

Recall from the preceding section that for every dangerous incomparable pair $(a,b)$ of $P$, we have $a,b\in\Ar(i,i+1)\cap\cdots\cap\Ar(j-1,j)$, where $c_i=\dn(b)$ and $c_j=\up(a)$.
In particular, the dangerous incomparable pairs $(a,b)$ of $P$ with $\dn(b)=c_1$ belong to $\Inc(\Ar(1,2))$ and thus can be reversed using $d_1$ linear extensions.
Similarly, the dangerous incomparable pairs $(a,b)$ of $P$ with $\up(a)=c_h$ can be reversed using $d_1$ linear extensions.

Let $S_0$ be the set of dangerous incomparable pairs $(a,b)$ of $P$ with $\dn(b)>c_1$ and $\up(a)<c_h$ in $P$.
It remains to partition the set $S_0$ into a bounded number of reversible subsets.
By convention, we will write $S$ only to denote some subset of the set $S_0$, we will write $a$, $a'$, $a_\alpha$, etc.\ only to denote a point from $A(S)$, and we will write $b$, $b'$, $b_\alpha$, etc.\ only to denote a point from $B(S)$.

For a set $S\subseteq S_0$, we call a pair $(a,b)\in S$ \emph{left-safe} with respect to $S$ if there is no point $b'\in B(S)$ with $a\leq b'$ and $\dn(b')<\dn(b)$ in $P$.
The following lemma plays an important role in our argument.

\begin{lemma}
\label{lem:left-safe}
For a set\/ $S\subseteq S_0$, if\/ $S'$ is the set of pairs in\/ $S$ that are left-safe with respect to\/ $S$, then\/ $\dim(S')\leq d_1$.
\end{lemma}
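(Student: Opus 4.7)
The plan is to exhibit a coloring $\gamma\colon S'\to\{1,\dots,d_1\}$ such that every color class is a reversible subset of $\Inc(P)$; this immediately gives $\dim(S')\leq d_1$. Given $(a,b)\in S'\subseteq S_0$, the point $\dn(b)$ exists and lies in $\{c_2,\dots,c_{h-1}\}$; writing $\dn(b)=c_i$, the observation recalled earlier in this section (for $(a,b)\in S_0$ with $\dn(b)=c_i$ and $\up(a)=c_j$, both $a$ and $b$ lie in $\Ar(i,i+1)\cap\cdots\cap\Ar(j-1,j)$) gives $a,b\in\Ar(i,i+1)$. So I can define $\gamma(a,b):=\phi_i(a,b)$, using the fixed coloring of $\Inc(\Ar(i,i+1))$ whose color classes are reversible.

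To verify that each color class of $\gamma$ is reversible, I would argue by contradiction, invoking the alternating-cycle characterization of reversibility. Suppose some color class of $\gamma$ contains an alternating cycle $(a_1,b_1),\dots,(a_s,b_s)$ in $S'$, with indices read cyclically, so $a_\alpha\leq b_{\alpha+1}$ in $P$ for each $\alpha$. Let $c_{i_\alpha}=\dn(b_\alpha)$. The key step uses left-safety: because $(a_\alpha,b_\alpha)\in S'$ is left-safe with respect to $S$, and $b_{\alpha+1}\in B(S)$ satisfies $a_\alpha\leq b_{\alpha+1}$ in $P$, the definition of left-safety rules out $\dn(b_{\alpha+1})<\dn(b_\alpha)$, forcing $i_{\alpha+1}\geq i_\alpha$. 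Cyclic traversal then gives $i_1\leq i_2\leq\cdots\leq i_s\leq i_1$, so all $i_\alpha$ share a common value $i$.

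With this in hand, every pair in the cycle lies in $\Inc(\Ar(i,i+1))$: each $b_\alpha$ has $\dn(b_\alpha)=c_i$ so $b_\alpha\in\Ar(i,i+1)$, each $a_\alpha$ lies in $\Ar(i,i+1)$ by the same observation, and incomparabilities plus the cycle comparabilities $a_\alpha\leq b_{\alpha+1}$ are inherited from $P$ because $\Ar(i,i+1)$ is convex in $P$. Since all pairs receive the same $\phi_i$-color, this yields a monochromatic alternating cycle inside a color class of $\phi_i$, contradicting the reversibility of that class. There is no real obstacle once the coloring is identified: the definition of left-safety is \emph{precisely} tuned to collapse the $\dn(b_\alpha)$-indices along an alternating cycle to a single value, which is exactly what confines the cycle to one convex slab $\Ar(i,i+1)$ where the preassigned coloring $\phi_i$ already handles reversibility.
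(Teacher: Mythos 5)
Your proof is correct and follows essentially the same route as the paper: color each pair $(a,b)\in S'$ by $\phi_i(a,b)$ with $c_i=\dn(b)$, then use left-safety along an alternating cycle to force all the values $\dn(b_\alpha)$ to coincide, confining the cycle to a single $\Ar(i,i+1)$ and contradicting the reversibility of the $\phi_i$-color classes. No gaps.
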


\begin{proof}
For a color $\gamma\in\{1,\ldots,d_1\}$, let $S'(\gamma)$ be the subset of $S'$ consisting of all pairs $(a,b)\in S'$ such that $\phi_i(a,b)=\gamma$, where $c_i=\dn(b)$.
It is enough to prove that the set $S'(\gamma)$ is reversible for every $\gamma\in\{1,\ldots,d_1\}$.
Suppose to the contrary that the set $S'(\gamma)$ is not reversible for some $\gamma\in\{1,\ldots,d_1\}$.
This means that $S'(\gamma)$ contains an alternating cycle $\{(a_\alpha,b_\alpha)\}_{\alpha=1}^s$, where $s\geq 2$.
For every $\alpha\in\{1,\ldots,s\}$, we have $a_\alpha\leq b_{\alpha+1}$ and therefore $\dn(b_\alpha)\leq\dn(b_{\alpha+1})$ in $P$, because the pair $(a_\alpha,b_\alpha)$ is left-safe with respect to $S$.
Since the latter inequality holds for every $\alpha\in\{1,\ldots,s\}$, there is a point $c_i\in C$ with $1\leq i\leq h-1$ such that $c_i=\dn(b_\alpha)$ for every $\alpha\in\{1,\ldots,s\}$.
This implies that $\{(a_\alpha,b_\alpha)\}_{\alpha=1}^s$ is a monochromatic alternating cycle in $\Inc(\Ar(i,i+1))$, which is a contradiction.
\end{proof}

Let $G$ denote the cover graph of $P$, which is a planar graph.
We fix a plane straight-line drawing of $G$, that is, a drawing of $G$ in the plane using non-crossing straight-line segments for edges.
We assume, without loss of generality, that the least point $c_1$ of the maximum chain $C$ lies on the outer face of the drawing.

A \emph{witnessing path} for a pair $(x,y)$ with $x\leq y$ in $P$ is a path $u_0\cdots u_r$ in $G$ such that $x=u_0<\cdots<u_r=y$ in $P$ (in particular, $u_i$ is covered by $u_{i+1}$ in $P$ for $0\leq i\leq r-1$).
It is clear that every comparable pair $(x,y)$ with $x\leq y$ in $P$ has at least one witnessing path.
For the purpose of our proof, it is convenient to fix one witnessing path, to be denoted by $W(x,y)$, for each pair $(x,y)$ with $x\leq y$ in $P$.
However, we need to choose the paths $W(x,y)$ in a consistent way, which is achieved in the following lemma.

\begin{lemma}
There is a function\/ $(x,y)\mapsto W(x,y)$ that maps each pair\/ $(x,y)$ with\/ $x\leq y$ in\/ $P$ to a witnessing path\/ $W(x,y)$ for\/ $(x,y)$ in such a way that the following holds:
\begin{enumerate}
\item if\/ $x\leq c\leq y$ in\/ $P$ and\/ $c\in C$, then\/ $W(x,y)$ passes through\/ $c$; in particular, if\/ $\up(x)$ and\/ $\dn(y)$ are defined and\/ $\up(x)\leq\dn(y)$ in\/ $P$, then\/ $W(x,y)$ passes through\/ $\up(x)$ and\/ $\dn(y)$;
\item if\/ $x_1\leq x_2\leq y_2\leq y_1$ in\/ $P$ and\/ $W(x_1,y_1)$ passes through\/ $x_2$ and\/ $y_2$, then\/ $W(x_2,y_2)$ is the subpath of\/ $W(x_1,y_1)$ from\/ $x_2$ to\/ $y_2$.
\end{enumerate}
\end{lemma}

\begin{proof}
Assume some (arbitrary) total order $\prec$ on the points of $P$.
The order $\prec$ extends naturally to a lexicographic total order $\prec_\lex$ on finite sequences of points of $P$ as follows:
\begin{itemize}
\item the empty sequence is the least element in $\prec_\lex$;
\item for any two non-empty sequences $u_0\cdots u_r$ and $v_0\cdots v_s$ of points of $P$, we have $u_0\cdots u_r\prec_\lex v_0\cdots v_s$ if and only if $u_0<v_0$ or $u_0=v_0$ and $u_1\cdots u_r\prec_\lex v_1\cdots v_s$.
\end{itemize}
For every pair $(x,y)$ with $x\leq y$ in $P$, let $W(x,y)$ be the $\prec_\lex$-minimum witnessing path among all witnessing paths for $(x,y)$ passing through all points $c\in C$ with $x\leq c\leq y$ in $P$.
It is clear that the paths $W(x,y)$ so defined satisfy both conditions of the lemma.
\end{proof}

The \emph{length} of a witnessing path $W(x,y)$, denoted by $\length{W(x,y)}$, is the number of edges in $W(x,y)$.
Lemmas \ref{lem:Dn_iUp_j} and~\ref{lem:Ar(i,i+1)} imply the following bounds on the lengths of witnessing paths:
\begin{itemize}
\item $\length{W(\dn(z),z)}\leq 2k-2$ for any $z\in P$,
\item $\length{W(z,\up(z))}\leq 2k-2$ for any $z\in P$,
\item $\length{W(z,w)}\leq 4k-5$ for any $z,w\in P-C$ with $\dn(w)<\up(z)$.
\end{itemize}

For $i\in\{2,\ldots,h-1\}$, we classify every edge of the form $c_iz$ of $G$, where $z\notin C$, as a
\begin{itemize}
\item\emph{left-edge} if the edges $c_ic_{i-1}$, $c_iz$, $c_ic_{i+1}$ occur in this order clockwise around $c_i$,
\item\emph{right-edge} if the edges $c_ic_{i-1}$, $c_iz$, $c_ic_{i+1}$ occur in this order counterclockwise around $c_i$.
\end{itemize}
Since $c_2\leq\dn(b)<\up(a)\leq c_{h-1}$ in $P$ for any $(a,b)\in S_0$, the above yields a partition of $S_0$ into four classes $S_{LL}$, $S_{LR}$, $S_{RL}$, and $S_{RR}$ according to how the last edge of $W(a,\up(a))$ and the first edge of $W(\dn(b),b)$ are classified.
That is, for every $(a,b)\in S_0$, we have
\begin{itemize}
\item $(a,b)\in S_{LL}\cup S_{LR}$ if the last edge of $W(a,\up(a))$ is a left-edge,
\item $(a,b)\in S_{RL}\cup S_{RR}$ if the last edge of $W(a,\up(a))$ is a right-edge,
\item $(a,b)\in S_{LL}\cup S_{RL}$ if the first edge of $W(\dn(b),b)$ is a left-edge,
\item $(a,b)\in S_{LR}\cup S_{RR}$ if the first edge of $W(\dn(b),b)$ is a right-edge.
\end{itemize}
To complete the proof of Theorem~\ref{thm:two-chains}, we will establish the following:
\begin{itemize}
\item $\dim(S_{LL})=O(k^3d_1)$ and $\dim(S_{RR})=O(k^3d_1)$, in the next section;
\item $\dim(S_{LR})=O(k^3+k^2d_1)$ and $\dim(S_{RL})=O(k^3+k^2d_1)$, in Section \ref{sec:opposite-side}.
\end{itemize}
This allows us to conclude that $\dim(S_0)=O(k^3d_1)$ and thus $\dim(P)=O(k^3d_1)$.

\section{Same-side dangerous pairs}
\label{sec:same-side}

In this section, we show that $\dim(S_{LL})=O(k^3d_1)$ and $\dim(S_{RR})=O(k^3d_1)$.
We present the argument only for $S_{LL}$, and the argument for $S_{RR}$ is symmetric.
To simplify the notation used in this portion of the proof, we (temporarily) set $S=S_{LL}$.

Recall that $c_2<b$ in $P$ for every $b\in B(S)$, as $S$ contains only dangerous pairs with $\dn(b)>c_1$ in $P$.
Furthermore, the choice of the witnessing paths guarantees that
\begin{itemize}
\item for any $b\in B(S)$, the common part of $W(c_1,b)$ with $W(c_1,c_h)$ is $W(c_1,\dn(b))$,
\item for any $b,b'\in B(S)$, the common part of $W(c_1,b)$ and $W(c_1,b')$ is $W(c_1,z)$ for some $z\in P$.
\end{itemize}
Therefore, the union of $W(c_1,c_h)$ and the witnessing paths $W(c_1,b)$ over all $b\in B(S)$ forms a tree, which we call the \emph{basic tree} and denote by $\BT$.
The points, edges, and paths in $\BT$ are called \emph{basic points}, \emph{basic edges}, and \emph{basic paths}.
For any two basic points $u$ and $v$, let $\BT(u,v)$ denote the unique path in $\BT$ (made of basic points and basic edges) between $u$ and $v$.

Let $\prec$ be the total order on the basic points determined by carrying out a depth-first search of $\BT$ from $c_1$ that processes the children of each node in the clockwise order and sets $u\prec v$ when $u$ is visited before $v$ for the first time.
In other words, for any two basic points $u$ and $v$, if $W(c_1,z)$ is the common part of $W(c_1,u)$ and $W(c_1,v)$, $c_1\neq z\neq v$, and either $u\in W(c_1,v)$ or the basic paths $\BT(z,c_1)$ (which is the reverse of $W(c_1,z)$), $W(z,u)$, and $W(z,v)$ go out of $z$ in this order clockwise, then we declare $u\prec v$.
It follows that $u\prec v$ for any two basic points $u$ and $v$ such that $\dn(u)<\dn(v)$ in $P$.
The greatest point in this order is $c_h$.
Figure~\ref{fig:same-side} illustrates how the basic tree might appear.

Let $a\in A(S)$.
Following \cite{ST14}, we call a basic point $v$ \emph{special} for $a$ if the following holds:
\begin{itemize}
\item $a\leq v$ in $P$,
\item $a\not\leq u$ in $P$ whenever $u\in W(c_1,v)$ and $u\neq v$.
\end{itemize}
Let $\Spec(a)$ denote the set of all basic points $v$ that are special for $a$ (see Figure~\ref{fig:same-side}).
Any two distinct points $v,v'\in\Spec(a)$ satisfy $v'\notin W(c_1,v)$ and $v\notin W(c_1,v')$, although they may be comparable in $P$.
The set $\Spec(a)$ inherits the order $\prec$ from $\BT$.
Since $\up(a)\in\Spec(a)$, we have $\Spec(a)\neq\emptyset$.
In fact, $\up(a)$ is the greatest point in the order $\prec$ on $\Spec(a)$.

\begin{figure}[t]
\begin{tikzpicture}[double distance=1.2pt,>=latex]
\tikzstyle{every node}=[circle,draw,fill=white,minimum size=4pt,inner sep=0pt]
\tikzstyle{every label}=[rectangle,draw=none,fill=none,minimum size=0pt,inner sep=2pt]
\fill[black!25] (6,0)--(6.5,1)--(6.5,2.1)--(7,3)--(8.5,2)--(9.5,1)--(8,0)--cycle;
\node[label=below:$c_1$] (c1) at (1,0) {};
\node[label=below:$c_2$] (c2) at (2,0) {};
\node[label=below:$c_3$] (c3) at (3,0) {};
\node[label=below:$c_4$] (c4) at (4,0) {};
\node[label=below:$c_5$] (c5) at (5,0) {};
\node[label=below:$c_6$] (c6) at (6,0) {};
\node[label=below:$c_7$] (c7) at (7,0) {};
\node[label=below:$c_8$] (c8) at (8,0) {};
\node[label=below:$c_9$] (c9) at (9,0) {};
\node[label=below:$c_{10}$] (c10) at (10,0) {};
\node[label=below:$c_{11}$] (c11) at (11,0) {};
\node[label=below:$c_{12}$] (c12) at (12,0) {};
\node (u1) at (2,1) {};
\node (u2) at (1.85,2) {};
\node[label=above left:$u_1$] (u3) at (2,3) {};
\node (u4) at (3,1.5) {};
\node[label=right:$u_2$] (u8) at (3.2,2.6) {};
\node (u5) at (4.2,1.1) {};
\node (u6) at (4.25,2.2) {};
\node[label=above:$u_3$] (u7) at (4.8,3.5) {};
\node (u9) at (6.5,1) {};
\node[label={[label distance=2pt]above:$u_4$}] (u10) at (8.5,2) {};
\node (u11) at (9.5,2.5) {};
\node (v1) at (5.25,1.25) {};
\node (v2) at (5.5,2.5) {};
\node[fill=black,label={[label distance=1pt]above:$a$}] (a) at (3.3,3.7) {};
\node[label=above left:$b$] (b) at (7.5,1.5) {};
\node[label=left:$x$] (x) at (6.5,2.1) {};
\node[label=above right:$y$] (y) at (9.5,1) {};
\node[label=above right:$t$] (t) at (7,3) {};
\node[rectangle,draw=none,fill=none] at (7.75,0.75) {$\cgR(t,x,y)$};
\draw[double,->] (c1)--(c2);
\draw[double,->] (c2)--(c3);
\draw[double,->] (c3)--(c4);
\draw[double,->] (c4)--(c5);
\draw[double,->] (c5)--(c6);
\draw[double,->] (c6)--(c7);
\draw[double,->] (c7)--(c8);
\draw[double,->] (c8)--(c9);
\draw[double,->] (c9)--(c10);
\draw[double,->] (c10)--(c11);
\draw[double,->] (c11)--(c12);
\draw[double,->] (c2)--(u1);
\draw[double,->] (u1)--(u2);
\draw[double,->] (u2)--(u3);
\draw[double,->] (u1)--(u4);
\path[->] (u4) edge (u6);
\draw[double,->] (c4)--(u5);
\draw[double,->] (u5)--(u6);
\draw[double,->] (u6)--(u7);
\draw[double,->] (u4)--(u8);
\path[->] (a) edge (u8);
\path[->] (a) edge (u3);
\path[->] (a) edge (u7);
\path[->] (u7) edge (t);
\draw[double,->] (c5)--(v1);
\draw[double,->] (v1)--(v2);
\draw[double,->] (v2)--(t);
\draw[double,->] (c6)--(u9);
\draw[double,->] (u9)--(x);
\draw[double,->] (u9)--(b);
\draw[double,->] (b)--(u10);
\draw[double,->] (u10)--(u11);
\path[->] (t) edge (x);
\path[->] (t) edge (u10);
\path[->] (u10) edge (y);
\path[->] (y) edge (c11);
\draw[double,->] (c8)--(y);
\end{tikzpicture}
\caption{Illustration for the concepts introduced in Section~\ref{sec:same-side}: arrows on the cover graph edges point according to the increasing direction of $P$; the basic tree is marked by empty points and double edges; $\Spec(a)=\{u_1\prec u_2\prec u_3\prec t\prec x\prec u_4\prec y\prec c_{11}\}$; $\Spec'(a)=\{u_1\prec x\prec y\prec c_{11}\}$.}
\label{fig:same-side}
\end{figure}
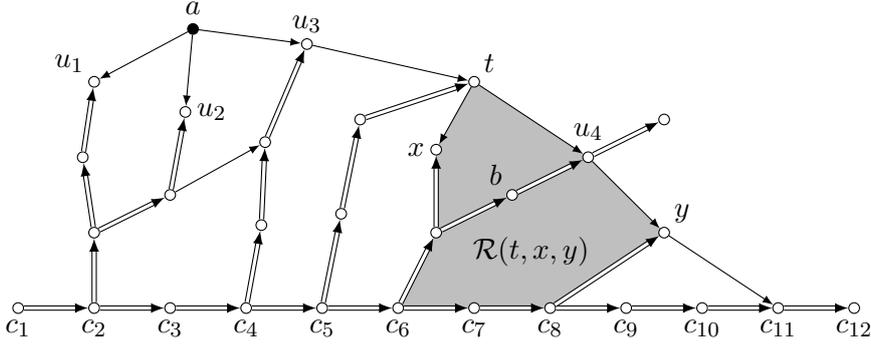

The following straightforward property is stated for emphasis.

\begin{observation}
If\/ $a\in A(S)$, $b\in B(S)$, and\/ $a\leq b$ in\/ $P$, then there is\/ $v\in\Spec(a)$ with\/ $v\in W(c_1,b)$ and\/ $v\leq b$ in\/ $P$.
\end{observation}

Now, suppose $a\in A(S)$, $x,y\in\Spec(a)$, and $x\prec y$ in $\BT$.
Every point $t\in P$ such that $a\leq t\leq x,y$ in $P$ and $t$ is the only common point of $W(t,x)$ and $W(t,y)$ gives rise to a region in the plane, denoted by $\cgR(t,x,y)$, whose boundary is the simple closed curve formed by the paths $\BT(x,y)$, $W(t,x)$, and $W(t,y)$.
The boundary of $\cgR(t,x,y)$ traversed counterclockwise goes from $t$ to $x$ along $W(t,x)$, then to $y$ along $\BT(x,y)$, and then back to $t$ along $W(t,y)$ in the reverse direction (see Figure~\ref{fig:same-side}).
It is possible that $t=x$ or $t=y$.
If $a\neq t$, then $a$ lies either in the interior or in the exterior of $\cgR(t,x,y)$, but not on the boundary.
Any region of the form $\cgR(t,x,y)$ where $x,y\in\Spec(a)$ and $a\leq t$ in $P$ is called an \emph{$a$-region}.
(For such an $a$-region, it is \emph{not} required that $t$ lies on the paths $W(a,x)$ and $W(a,y)$.)

Let $T_0$ denote the subset of $S$ consisting of all pairs that are left-safe with respect to $S$.
By Lemma~\ref{lem:left-safe}, we have $\dim(T_0)\leq d_1$.
To complete the proof, we will show that $\dim(S-T_0)\leq\binom{2k-1}{2}(4k-4)d_1$ by partitioning $S-T_0$ into reversible subsets of the form $T(h_1,h_2,n,\gamma)$, where the four parameters are integers with $0\leq h_1<h_2\leq 2k-2$, $0\leq n\leq 4k-5$, and $1\leq\gamma\leq d_1$.
Membership in these sets will be determined in two stages.
In the first stage, we will partition $S-T_0$ into subsets of the form $T(h_1,h_2)$ where $0\leq h_1<h_2\leq 2k-2$.
In the second stage, for each pair $(h_1,h_2)$ with $0\leq h_1<h_2\leq 2k-2$, we will further partition $T(h_1,h_2)$ into reversible subsets of the form $T(h_1,h_2,n,\gamma)$, where $0\leq n\leq 4k-5$ and $1\leq\gamma\leq d_1$.

To describe the first partition, we need some more definitions.
For a basic point $v$, let $h(v)=\length{W(\dn(v),v)}$.
We have $0\leq h(v)\leq 2k-2$ for every basic point $v$, and we have $h(v)=0$ if and only if $v\in C$.
For a point $a\in A(S-T_0)$, let
\[\Spec'(a)=\{u\in\Spec(a)\colon\text{for every $v\in\Spec(a)$, if $v\prec u$ in $\BT$, then $h(v)>h(u)$}\}\]
(see Figure~\ref{fig:same-side}).
The set $\Spec'(a)$ inherits the order $\prec$ from $\Spec(a)$.
The sequence of numbers $h(v)$ for the points $v\in\Spec'(a)$ considered in the order $\prec$ is strictly decreasing.
The first point in $\Spec'(a)$ is the first point in $\Spec(a)$, and the last point in $\Spec'(a)$ is $\up(a)$.

Now, let $(a,b)\in S-T_0$.
Since $(a,b)$ is not left-safe with respect to $S$, there is a point $u\in\Spec(a)$ with $\dn(u)<\dn(b)$ in $P$.
Consequently, there is a point $u\in\Spec'(a)$ with $\dn(u)<\dn(b)$ in $P$ and thus $u\prec b$ in $\BT$.
We also have $b\prec\up(a)\in\Spec'(a)$ and $b\notin\Spec'(a)$.
If follows that there are two points $x,y\in\Spec'(a)$ consecutive in the order $\prec$ on $\Spec'(a)$ such that $x\prec b\prec y$ in $\BT$.
Let $h_1=h(y)$ and $h_2=h(x)$, so that $0\leq h_1<h_2\leq 2k-2$.
We put the pair $(a,b)$ to the set $T(h_1,h_2)$ of the first partition.

To complete the proof, it remains to show that $\dim(T(h_1,h_2))\leq(4k-4)d_1$ for each pair of integers $(h_1,h_2)$ with $0\leq h_1<h_2\leq 2k-2$.
To this end, we fix an arbitrary pair $(h_1,h_2)$ of this form and show that $\dim(T(h_1,h_2))\leq(4k-4)d_1$ by explaining how $T(h_1,h_2)$ can be partitioned into reversible sets of the form $T(h_1,h_2,n,\gamma)$, where $0\leq n\leq 4k-5$ and $1\leq\gamma\leq d_1$.
This task will require some preliminary work.

In the reasoning used above to put a pair $(a,b)$ in $T(h_1,h_2)$, the points $x$ and $y$ have been defined depending on both $a$ and $b$.
Now, however, for any point $a\in A(T(h_1,h_2))$, there is a unique point $y\in\Spec'(a)$ with $h(y)=h_1$, and there is a unique point $x\in\Spec'(a)$ with $h(x)=h_2$.
Let $y(a)$ and $x(a)$ denote these points, respectively, for any $a\in A(T(h_1,h_2))$.

\begin{lemma}
\label{lem:main}
Let\/ $(a,b),(a',b')\in T(h_1,h_2)$ be such that\/ $a'\leq b$ and\/ $\dn(y(a))<\dn(y(a'))$ in\/ $P$.
Then, for every\/ $a$-region of the form\/ $\cgR(t,x(a),y(a))$, there is an\/ $a'$-region of the form\/ $\cgR(t',x(a'),y(a'))$ such that\/ $\length{W(t',x(a'))}<\length{W(t,x(a))}$.
\end{lemma}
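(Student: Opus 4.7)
\emph{Proof plan.} The plan is to combine a planarity argument with Proposition~\ref{pro:special} and the defining properties of $\Spec'$ in order to produce the desired $a'$-region from the given $a$-region.

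First, I would show that $b$ lies in the closed region $\overline{\cgR(x(a),y(a),t)}$. Since $b\in\BT$ with $x(a)\prec b\prec y(a)$ in the DFS order $\prec$, and since $c_1$ was placed on the outer face of the planar drawing, the blue path from $c_1$ to $b$ cannot avoid the boundary arc $\BT(x(a),y(a))$; it must enter the region through a point of $\BT(x(a),y(a))$ and continue on its interior side, placing $b$ in the closed region. Applying Proposition~\ref{pro:special} to the comparable pair $a'\leq b$ yields a point $v\in\Spec(a')$ with $v\in W(c_1,b)$ and $v\leq b$. The portion of $W(c_1,b)$ from its first entry into the region onward therefore lies in $\overline{\cgR(x(a),y(a),t)}$, and the hypothesis $\dn(y(a))<\dn(y(a'))$, which implies $y(a)\prec y(a')$, will let me force $v$ to lie in this interior portion rather than on the initial blue arc outside.

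Second, I would locate $x(a')$ and $y(a')$ with respect to the region. From $h(y(a'))=h_1<h_2=h(x(a'))$ and the strictly decreasing $h$-values along $\Spec'(a')$ in the order $\prec$, one has $x(a')\prec y(a')$. The inequality $\dn(y(a))<\dn(y(a'))$ then places $y(a')$ strictly after $y(a)$ in $\prec$, hence outside $\overline{\cgR(x(a),y(a),t)}$. Using the relation between $v\in\Spec(a')$ (with $v\preceq b\prec y(a)$) and the points of $\Spec'(a')$, I would argue that $x(a')$ can be taken inside (or, in a degenerate case, on the boundary of) the region. The point $t'$ is then chosen as the first point at which a witnessing path from $a'$ to $y(a')$ exits $\overline{\cgR(x(a),y(a),t)}$; condition~(2) of the witnessing-path function, together with the positioning of $x(a')$, shows $t'$ lies on the witnessing path from $a'$ to $x(a')$ as well, so that $\cgR(x(a'),y(a'),t')$ is a genuine $a'$-region.

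The main obstacle will be establishing the strict inequality $\length{W(t',x(a'))}<\length{W(t,x(a))}$. The idea is that $t'$ has been forced to sit on the boundary arc $W(t,x(a))\cup W(t,y(a))$ of the $a$-region at a point strictly interior to it (neither at $t$ nor at $x(a)$ or $y(a)$), so the subpath of the boundary from $t'$ to $x(a)$ is properly shorter than $W(t,x(a))$; combined with the consistency condition~(2) identifying $W(t',x(a'))$ with the appropriate subpath of an already chosen witnessing path, this should yield the required drop by at least one edge. Carrying out the case analysis for where $t'$ lands on the boundary and certifying the length decrease in each case is the delicate planarity step at the heart of the proof.
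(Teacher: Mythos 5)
Your outline reaches the right intermediate configuration (the pair $b$ inside, $y(a')$ outside, a crossing point $t'$ on the boundary of the old region), but it misses the single fact that actually makes the length drop work: in the paper's proof one shows that $x(a')=x(a)$ \emph{exactly}, and that $t'$ can be forced to lie on the path $W(t,x(a))$ specifically, with $t'\neq t$. Then, by the consistency property of the witnessing paths, $W(t',x(a'))=W(t',x(a))$ is a proper subpath of $W(t,x(a))$, and the strict inequality is immediate. Your version replaces this with ``$x(a')$ can be taken inside (or on the boundary of) the region'' and allows $t'$ to land anywhere on $W(t,x(a))\cup W(t,y(a))$; under those weaker conclusions the inequality $\length{W(t',x(a'))}<\length{W(t,x(a))}$ simply does not follow, since $W(t',x(a'))$ need not bear any relation to the boundary of the old region (and if $t'$ sat on $W(t,y(a))$, the ``subpath towards $x(a)$'' you invoke is not the path defining the new region at all). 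Proving $x(a')=x(a)$ is the delicate part: it uses that $x(a')$ and $y(a')$ are \emph{consecutive} in $\Spec'(a')$ with $h(x(a'))=h_2=h(x(a))$ and $h(y(a'))=h_1$, together with the auxiliary facts that $a'\not\leq y(a)$ and that the only point of $W(c_1,x(a))$ above $a'$ can be $x(a)$ itself; none of this appears in your plan, and your appeal to Proposition~\ref{pro:special} applied to $a'\leq b$ (producing a special point on $W(c_1,b)$) does not substitute for it.

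A second, smaller gap: your choice of $t'$ as the first exit point of $W(a',y(a'))$ from the region presupposes that $a'$ itself lies in the interior of $\cgR(t,x(a),y(a))$, which is not automatic and is a separate claim in the paper (proved from $b$ being inside plus the two ``$a'$ is not below the left boundary'' facts above, which force any crossing of $W(a',b)$ with the boundary onto $W(t,x(a))$ and hence give the contradiction $a\leq t\leq b$). The same two facts are what pin $t'$ onto $W(t,x(a))$ rather than $W(t,y(a))$ or the blue arc. Also note that the definition of an $a'$-region does not require $t'$ to lie on a witnessing path from $a'$ to $x(a')$, so the step where you invoke condition~(2) of the witnessing-path function for that purpose is aimed at the wrong target; what must be checked is $a'\leq t'\leq x(a'),y(a')$ and that $t'$ is the unique common point of $W(t',x(a'))$ and $W(t',y(a'))$, which in the paper comes from taking $t'$ to be the \emph{last} boundary contact of $W(a',y(a'))$.
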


\begin{proof}
Fix an $a$-region $\cgR(t,x(a),y(a))$, where $a\leq t$ in $P$.
For simplicity of notation, let $x=x(a)$, $y=y(a)$, $\cgR=\cgR(t,x(a),y(a))$, $x'=x(a')$, and $y'=y(a')$.
We prove the lemma in several steps, establishing the following claims:
\begin{enumerate}
\item\label{item:y'-ext} $y'$ lies in the exterior of $\cgR$;
\item\label{item:b-int} $b$ lies in the interior or on the boundary of $\cgR$;
\item\label{item:a'<=y} $a'\not\leq y$ in $P$;
\item\label{item:a'<=x} if $a'\leq u$ in $P$ and $u\in W(c_1,x)$, then $u=x$;
\item\label{item:a'-int} $a'$ lies in the interior of $\cgR$;
\item\label{item:t'} there is a point $t'$ on $W(t,x)$ such that $a'\leq t\leq x,y'$ in $P$, $t'\neq t$, and $t'$ is the only common point of $W(t',x)$ and $W(t',y')$;
\item\label{item:x'=x} $x'=x$.
\end{enumerate}
The conclusion of the lemma then follows directly from \ref{item:t'} and \ref{item:x'=x}.

For the proof of \ref{item:y'-ext}, suppose $y'$ does not lie in the exterior of $\cgR$.
Since $\dn(y)<\dn(y')$ in $P$, the first edge of the path $W(\dn(y),y')$ is part of the chain $C$ and lies in the exterior of $\cgR$.
Therefore, the path $W(\dn(y),y')$ crosses the boundary of $\cgR$ at some point $u$ other than $\dn(y)$.
It follows that $u\leq x$ or $u\leq y$ in $P$, which contradicts the fact that $\dn(x)\leq\dn(y)<\dn(y')=\dn(u)$ in $P$.

For the proof of \ref{item:b-int}, suppose $b$ lies in the exterior of $\cgR$.
Since $x\prec b\prec y$ in $\BT$, the basic path $W(\dn(b),b)$ enters the interior of $\cgR$ with the first edge that is not common with $W(\dn(x),x)$ nor $W(\dn(y),y)$.
Therefore, it must exit the interior of $\cgR$ through a point $u$ on $W(t,x)$ or $W(t,y)$.
It follows that $a\leq u\leq b$ in $P$, which is a contradiction.

For the proof of \ref{item:a'<=y}, suppose $a'\leq y$ in $P$.
It follows that $a'$ has a special point $u$ on the basic path $W(c_1,y)$.
We have $h(u)\leq h(y)=h_1=h(y')$.
However, we also have $\dn(u)\leq\dn(y)<\dn(y')$ in $P$ and thus $u\prec y'$ in $\BT$.
This is a contradiction with $y'\in\Spec'(a')$.

For the proof of \ref{item:a'<=x}, suppose $a'\leq u$ in $P$, $u\in W(c_1,x)$, and $u\neq x$.
Assume without loss of generality that $u$ is a special point for $a'$.
We have $h(u)<h(x)=h_2=h(x')$ and thus $x'\prec u$ in $\BT$.
We also have $\dn(u)\leq\dn(y)<\dn(y')$ in $P$ and thus $u\prec y'$ in $\BT$, which implies $h(u)>h(y')$, as $y'\in\Spec'(a')$.
However, since $x'$ and $y'$ are consecutive in the order $\prec$ on $\Spec'(a')$, no point $u\in\Spec(a')$ with $x'\prec u\prec y$ in $\BT$ can satisfy $h(x')>h(u)>h(y')$, which is a contradiction.

For the proof of \ref{item:a'-int}, suppose $a'$ does not lie in the interior of $\cgR$.
By \ref{item:b-int}, $b$ is not in the exterior of $\cgR$, so the path $W(a',b)$ intersects the boundary of $\cgR$ at some point $u$.
Since $a'\leq u$ in $P$, it follows from \ref{item:a'<=y} and \ref{item:a'<=x} that $u$ lies on $W(t,x)$.
Thus $a\leq t\leq u\leq b$ in $P$, which is a contradiction.

By \ref{item:a'-int} and \ref{item:y'-ext}, $a'$ is in the interior and $y'$ is in the exterior of $\cgR$, so the path $W(a',y')$ crosses the boundary of $\cgR$.
For the proof of \ref{item:t'}, let $t'$ be the last common point of $W(a',y')$ with the boundary of $\cgR$ in the order along $W(a',y')$.
Since $a'\leq t'$ in $P$, it follows from \ref{item:a'<=y} and \ref{item:a'<=x} that $t'$ lies on $W(t,x)$ and $t'\neq t$.
Furthermore, it follows from the choice of $t'$ that it is the only common point of $W(t',x)$ and $W(t',y')$.
Therefore, $t'$ satisfies all the conditions of \ref{item:t'}.

It remains to prove \ref{item:x'=x}.
Suppose $x'\neq x$.
The fact that $a'\leq t'\leq x$ in $P$ and \ref{item:a'<=x} imply that $x\in\Spec(a')$.
Since $h(x)=h_2=h(x')$ and $x'\in\Spec'(a')$, we have $x'\prec x$ in $\BT$.
Therefore, the first edge of the path $W(c_1,x')$ that is not common with $W(c_1,x)$ lies in the exterior of $\cgR$.
If $x'$ is not in the exterior of $\cgR$, then the path $W(c_1,x')$ crosses $W(t,x)$ or $W(t,y)$ and thus $a\leq t\leq x'$ in $P$.
If $x'$ is not in the interior of $\cgR$, then the path $W(a',x')$ crosses the boundary of $\cgR$ at some point $u$; again, since $a'\leq u$ in $P$, it follows from \ref{item:a'<=y} and \ref{item:a'<=x} that $u$ lies on $W(t,x)$ and thus $a\leq t\leq u\leq x'$ in $P$.
In either case, we have concluded that $a\leq x'$ in $P$.
Consequently, there is a point $v\in\Spec(a)$ on the path $W(c_1,x')$.
It follows that $v\prec x'\prec x$ in $\BT$ and $h(v)\leq h(x')=h(x)$, which is a contradiction to $x\in\Spec'(a)$.
Thus $x'=x$.
\end{proof}

For incomparable pairs $(a,b),(a',b')\in T(h_1,h_2)$, let $(a,b)\to(a',b')$ denote that $a'\leq b$ and $\dn(y(a))<\dn(y(a'))$ in $P$.
For a pair $(a,b)\in T(h_1,h_2)$, let $\ell(a,b)$ denote the greatest integer $s$ for which there is a sequence $\{(a_\alpha,b_\alpha)\}_{\alpha=0}^s\subseteq T(h_1,h_2)$ such that $(a_0,b_0)\to\cdots\to(a_s,b_s)=(a,b)$.

\begin{lemma}
\label{lem:ell}
For every incomparable pair\/ $(a,b)\in T(h_1,h_2)$, the following holds:
\begin{enumerate}
\item\label{item:ell1} $0\leq\ell(a,b)\leq 4k-5$;
\item\label{item:ell2} if\/ $(a,b)\to(a',b')\in T(h_1,h_2)$, then\/ $\ell(a,b)<\ell(a',b')$.
\end{enumerate}
\end{lemma}

\begin{proof}
Fix a sequence $\{(a_\alpha,b_\alpha)\}_{\alpha=0}^s\subseteq T(h_1,h_2)$ such that $(a_0,b_0)\to\cdots\to(a_s,b_s)=(a,b)$, where $s=\ell(a,b)$.
To see \ref{item:ell1}, choose any $a_0$-region of the form $\cgR(t_0,x(a_0),y(a_0))$, and apply Lemma~\ref{lem:main} repeatedly to obtain a sequence of regions $\{\cgR(t_\alpha,x(a_\alpha),y(a_\alpha))\}_{\alpha=0}^s$ such that $\length{W(t_0,x(a_0))}>\cdots>\length{W(t_s,x(a_s))}\geq 0$; these inequalities are possible only when $s\leq\length{W(t_0,x(a_0))}\leq 4k-5$, where the latter inequality follows from the fact that $\dn(x(a_0))<\up(a_0)\leq\up(t_0)$.
To see \ref{item:ell2}, set $(a_{s+1},b_{s+1})=(a',b')$ and observe that the sequence $\{(a_\alpha,b_\alpha)\}_{\alpha=0}^{s+1}$ witnesses $\ell(a',b')\geq s+1$.
\end{proof}

We partition the set $T(h_1,h_2)$ into subsets of the form $T(h_1,h_2,n,\gamma)$, putting every pair $(a,b)\in T(h_1,h_2)$ into the set $T(h_1,h_2,n,\gamma)$ such that $n=\ell(a,b)$ and $\gamma$ is determined as follows:
\begin{itemize}
\item if $h_1>0$, then $\dn(b)\leq\dn(y(a))<\up(a)$ in $P$, so $(a,b)$ is an incomparable pair of $\Ar(i,i+1)$, where $c_i=\dn(y(a))$, and we let $\gamma=\phi_i(a,b)$;
\item if $h_1=0$, then $y(a)=\up(a)$, so $(a,b)$ is an incomparable pair of $\Ar(j-1,j)$, where $c_j=\up(a)$, and we let $\gamma=\phi_{j-1}(a,b)$.
\end{itemize}
It follows that $0\leq n\leq 4k-5$ (by Lemma \ref{lem:ell}~\ref{item:ell1}) and $1\leq\gamma\leq d_1$.
To complete the proof, it remains to show the following.

\begin{lemma}
Every set\/ $T(h_1,h_2,n,\gamma)$ is reversible.
\end{lemma}

\begin{proof}
Suppose not.
Pick an alternating cycle $\{(a_\alpha,b_\alpha)\}_{\alpha=1}^s$ contained in $T(h_1,h_2,n,\gamma)$, where $s\geq 2$.
For $1\leq\alpha\leq s$, since $\ell(a_\alpha,b_\alpha)=n=\ell(a_{\alpha+1},b_{\alpha+1})$, it follows from Lemma \ref{lem:ell}~\ref{item:ell2} that $(a_{\alpha+1},b_{\alpha+1})\not\to(a_\alpha,b_\alpha)$.
This and $a_\alpha\leq b_{\alpha+1}$ in $P$ yield $\dn(y(a_\alpha))\leq\dn(y(a_{\alpha+1}))$ in $P$, for $1\leq\alpha\leq s$.
This implies that there is $c\in C$ with $\dn(y(a_\alpha))=c$ for $1\leq\alpha\leq s$.
If $h_1>0$ and $c_i=c$, then $\{(a_\alpha,b_\alpha)\}_{\alpha=1}^s$ is an alternating cycle in $\Ar(i,i+1)$, and $\phi_i(a_\alpha,b_\alpha)=\gamma$ for $1\leq\alpha\leq s$, which is a contradiction.
If $h_1=0$ and $c_j=c$, then $\{(a_\alpha,b_\alpha)\}_{\alpha=1}^s$ is an alternating cycle in $\Ar(j-1,j)$, and $\phi_{j-1}(a_\alpha,b_\alpha)=\gamma$ for $1\leq\alpha\leq s$, which is again a contradiction.
\end{proof}

We have partitioned $S$ into $O(k^3d_1)$ reversible subsets of the form $T(h_1,h_2,n,\gamma)$ with $0\leq h_1<h_2\leq 2k-2$, $0\leq n\leq 4k-5$, and $1\leq\gamma\leq d_1$, thus proving that $\dim(S)=O(k^3d_1)$.

\section{Opposite-side dangerous pairs}
\label{sec:opposite-side}

In this section, we show that $\dim(S_{LR})=O(k^2d_1+k^3)$ and $\dim(S_{RL})=O(k^2d_1+k^3)$.
We present the argument only for $S_{RL}$, and the argument for $S_{LR}$ is symmetric.
Recall that the set $S_{RL}$ contains only dangerous incomparable pairs $(a,b)$ of $P$ with $\dn(b)>c_1$ and $\up(a)<c_h$ in $P$.
We begin by setting $S=S_{RL}$.
As the argument proceeds, the meaning of $S$ changes, but the ``new'' set $S$ is always a subset of the ``old'' set $S$.
Each time the meaning of $S$ changes, the target upper bound on $\dim(S)$ is adjusted accordingly.

In the argument given thus far, our main emphasis has been on classifying incomparable pairs.
Now, we want to pay attention to comparable pairs.
When $a\in A(S)$, $b\in B(S)$, and $a\leq b$ in $P$, we call $(a,b)$ a \emph{strong comparable pair} if $\up(a)\leq\dn(b)$ in $P$, and we call $(a,b)$ a \emph{weak comparable pair} if $\up(a)>\dn(b)$ in $P$.
When a comparable pair $(a,b)$ is weak, the witnessing path $W(a,b)$ has length at most $4k-5$ and does not intersect the chain $C$.

\begin{lemma}
\label{lem:weak-comp}
For every strict alternating cycle\/ $\{(a_\alpha,b_\alpha)\}_{\alpha=1}^s$ in\/ $S$, where\/ $s\geq 2$, there is at most one index\/ $\alpha\in\{1,\ldots,s\}$ such that\/ $(a_\alpha,b_{\alpha+1})$ is a strong comparable pair.
\end{lemma}

\begin{proof}
Suppose there are two distinct indices $\alpha,\beta\in\{1,\ldots,s\}$ such that the comparable pairs $(a_\alpha,b_{\alpha+1})$ and $(a_\beta,b_{\beta+1})$ are strong.
Without loss of generality, we have $\up(a_\alpha)\leq\up(a_\beta)$ in $P$.
It follows that $a_\alpha<\up(a_\alpha)\leq\up(a_\beta)\leq\dn(b_{\beta+1})<b_{\beta+1}$ in $P$, which contradicts the assumption that the alternating cycle $\{(a_\alpha,b_\alpha)\}_{\alpha=1}^s$ is strict.
\end{proof}

When $T\subseteq S$, let $Q(T)$ denote the set of weak comparable pairs $(a,b)$ such that $a\in A(T)$ and $b\in B(T)$.
By Lemma~\ref{lem:weak-comp}, if $Q(T)=\emptyset$, then there is no strict alternating cycle in $T$ and hence $T$ is reversible.
In particular, we can assume for the rest of this section that $Q(S)\neq\emptyset$.

For every weak comparable pair $(a,b)\in Q(S)$, the choice of the witnessing paths $W(a,\up(a))$ (which ends with a right-edge), $W(\dn(b),b)$ (which starts with a left-edge), and $W(a,b)$ guarantees the following:
\begin{itemize}
\item the common part of $W(a,\up(a))$ and $W(a,b)$ is $W(a,v)$ for some point $v$;
\item the common part of $W(\dn(b),b)$ and $W(a,b)$ is $W(w,b)$ for some point $w$.
\end{itemize}
This yields a region $\cgD(a,b)$ in the plane whose boundary is a simple closed curve formed by the following four paths (see Figure~\ref{fig:opposite-side}):
\begin{itemize}
\item $W(\dn(b),\up(a))$, called the \emph{middle} of $\cgD(a,b)$,
\item $W(v,\up(a))$, called the \emph{bottom} of $\cgD(a,b)$,
\item $W(\dn(b),w)$, called the \emph{top} of $\cgD(a,b)$,
\item $W(v,w)$, called the \emph{right side} of $\cgD(a,b)$.
\end{itemize}
The name for $W(v,w)$ is justified by the assumption (made at the beginning of the proof) that $c_1$ lies on the outer face of the drawing of $G$ and thus $c_h$ lies inside $\cgD(a,b)$.

\begin{figure}[t]
\begin{tikzpicture}[>=latex]
\tikzstyle{every node}=[circle,draw,fill,minimum size=4pt,inner sep=0pt]
\tikzstyle{every label}=[rectangle,draw=none,fill=none,minimum size=0pt,inner sep=2pt]
\fill[black!10] (4,0)--(4.8,-2)--(8.3,-2)--(8.8,-0.3)--(8.4,2.4)--(5.6,2.8)--(2.7,2.5)--(2,0)--cycle;
\fill[black!25] (6,0)--(6,-1.1)--(7.6,-0.9)--(7.7,0.8)--(5.6,1)--(5,0)--cycle;
\node[label=below:$c_1$] (c1) at (1,0) {};
\node[label=below:$c_2$] (c2) at (2,0) {};
\node[label=below:$c_3$] (c3) at (3,0) {};
\node[label=above:$c_4$] (c4) at (4,0) {};
\node[label=below:$c_5$] (c5) at (5,0) {};
\node[label=above:$c_6$] (c6) at (6,0) {};
\node[label=above:$c_7$] (c7) at (7,0) {};
\node[label=below:$a_1$] (a1) at (6,-1.1) {};
\node[label=above:$b_1$] (b1) at (5.6,1) {};
\node[label=below:$a_2$] (a2) at (8.3,-2) {};
\node[label=above:$b_2$] (b2) at (8.4,2.4) {};
\node (u1) at (7.6,-0.9) {};
\node (u2) at (7.7,0.8) {};
\node (u3) at (4,1.6) {};
\node (u4) at (5.9,2) {};
\node (u5) at (4.8,-2) {};
\node (u6) at (8.8,-0.3) {};
\node (u7) at (2.7,2.5) {};
\node (u8) at (5.6,2.8) {};
\node[rectangle,draw=none,fill=none] at (6.8,-0.5) {$\cgD_1$};
\node[rectangle,draw=none,fill=none] at (7.4,1.8) {$\cgD_2$};
\path[thick,->] (c1) edge (c2);
\path[thick,->] (c2) edge (c3);
\path[thick,->] (c3) edge (c4);
\path[thick,->] (c4) edge (c5);
\path[thick,->] (c5) edge (c6);
\path[thick,->] (c6) edge (c7);
\path[->] (a1) edge (c6) edge (u1);
\path[->] (u1) edge (u2);
\path[->] (u2) edge (b1) edge (u4);
\path[->] (c5) edge (b1);
\path[->] (c3) edge (u3);
\path[->] (u3) edge (b1) edge (u4);
\path[->] (a2) edge (u5) edge (u6);
\path[->] (u5) edge (c4);
\path[->] (u6) edge (u2) edge (b2);
\path[->] (c2) edge (u5) edge (u7);
\path[->] (u7) edge (u3) edge (u8);
\path[->] (u8) edge (b2);
\end{tikzpicture}
\caption{Illustration for the concepts used in Section~\ref{sec:opposite-side}: $\cgD_1=\cgD(a_1,b_1)$ and $\cgD_2=\cgD(a_2,b_2)$.}
\label{fig:opposite-side}
\end{figure}
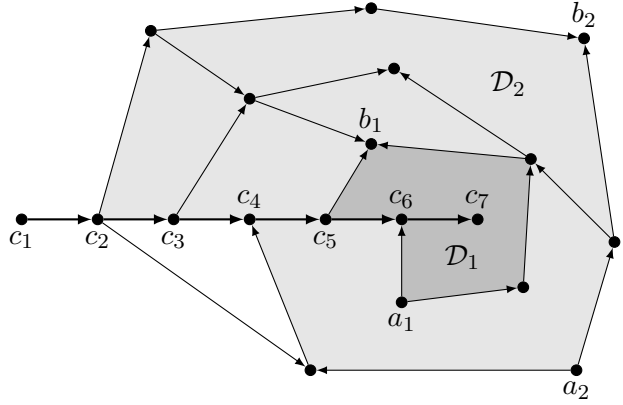

Next, we determine a positive integer $n$ and a sequence of weak comparable pairs $\{(a_i,b_i)\}_{i=1}^{n-1}$ using the following ``greedy'' procedure.
We start by choosing $(a_1,b_1)\in Q(S)$ so as to maximize $\dn(b_1)$.
Then, for $i\geq 2$, after $(a_{i-1},b_{i-1})$ has been determined, we consider all pairs $(a,b)\in Q(S)$ such that the region $\cgD(a_{i-1},b_{i-1})$ is contained in the interior of the region $\cgD(a,b)$, that is, $\dn(b)<\up(a)<\dn(b_{i-1})$ in $P$ and the boundaries of $\cgD(a,b)$ and $\cgD(a_{i-1},b_{i-1})$ are disjoint.
If there is no such pair $(a,b)$, then we set $n=i$ and the construction terminates.
Otherwise, from all such pairs $(a,b)$, we choose $(a_i,b_i)$ so as to maximize $\dn(b_i)$ (see Figure \ref{fig:opposite-side}).

For $1\leq i\leq n-1$, let $u_i=\dn(b_i)$ and $\cgD_i=\cgD(a_i,b_i)$.
Furthermore, let $u_0=c_h$ and $u_n=c_1$.
It follows from the construction that $u_n<u_{n-1}<\cdots<u_1<u_0$ in $P$.
Define functions $M\colon A(S)\to\{1,\ldots,n\}$ and $N\colon B(S)\to\{1,\ldots,n\}$ as follows:
\begin{itemize}
\item for $a\in A(S)$, $M(a)$ is the least positive integer $j$ such that $u_j\leq\up(a)$;
\item for $b\in B(S)$, $N(b)$ is the least positive integer $j$ such that $u_j\leq\dn(b)$.
\end{itemize}
Let $q=8k-9$.

\begin{lemma}
\label{lem:notfar}
If\/ $(a,b)\in Q(S)$, then\/ $N(b)\leq M(a)+q$.
\end{lemma}

\begin{proof}
Let $i=M(a)$ and $j=N(b)$.
The graph $G$ contains a path $W_0$ from $\up(a)$ to $\dn(b)$ which is formed by the bottom, the right side, and the top of $\cgD(a,b)$.
The bottom and the top of $\cgD(a,b)$ have length at most $2k-2$, while the right side has length at most $4k-5$, so $W_0$ has length at most $(2k-2)+(2k-2)+(4k-5)=8k-9$.
In particular, there are at most $8k-10$ points on $W_0$ distinct from $\up(a)$ and $\dn(b)$.

The path $W_0$ starts at the point $\up(a)$, which is in the interior of all $\cgD_{i+1},\ldots,\cgD_{j-1}$, and ends at the point $\dn(b)$, which is in the exterior of all $\cgD_{i+1},\ldots,\cgD_{j-1}$.
Therefore, it must cross the boundaries of the regions $\cgD_{i+1},\ldots,\cgD_{j-1}$ in $j-i-1$ distinct points.
This requires $j-i-1\leq 8k-10$, so that $j\leq i+8k-9=i+q$.
\end{proof}

For every $r\in\{0,\ldots,q-1\}$, let $S(r)=\{(a,b)\in S\colon M(a)\equiv r\pmod{q}\}$.
Thus $S=\bigcup_{r=0}^{q-1}S(r)$.
Since $q=O(k)$, it suffices to show that $\dim(S(r))=O(kd_1+k^2)$ for every $r\in\{0,\ldots,q-1\}$ to complete the proof.
So we fix a value of $r$ and update the meaning of $S$ by setting $S=S(r)$.
For every $m\in\{1,\ldots,n\}$ with $m\equiv r\pmod{q}$, let $S_m=\{(a,b)\in S\colon M(a)=N(b)=m\}$.

\begin{lemma}
We have\/ $\dim(S)\leq d_1+\max\{\dim(S_m)\colon 1\leq m\leq n$, $m\equiv r\pmod{M}\}$.
\end{lemma}

\begin{proof}
Let $d_2=\max\{\dim(S_m)\colon 1\leq m\leq n,$ $m\equiv r\pmod{q}\}$.
For every $m\in\{1,\ldots,n\}$ with $m\equiv r\pmod{q}$, there is a coloring $\psi_m$ that assigns a color $\psi_m(a,b)\in\{d_1+1,\ldots,d_1+d_2\}$ to every incomparable pair $(a,b)\in S_m$ so that the set $\{(a,b)\in S_m\colon\psi_m(a,b)=\gamma\}$ is reversible for every color $\gamma\in\{d_1+1,\ldots,d_1+d_2\}$.

Every pair $(a,b)\in S$ with $N(b)>M(a)$ is an incomparable pair of $\Ar(j-1,j)$, where $c_j=u_{M(a)}$.
Recall that $\phi_{j-1}$ is a coloring of $\Inc(\Ar(j-1,j))$ that uses colors $\{1,\ldots,d_1\}$ and avoids monochromatic alternating cycles.
For every $\gamma\in\{1,\ldots,d_1\}$, let $S(\gamma)$ be the subset of $S$ consisting of all pairs $(a,b)$ such that $N(b)>M(a)$ and $\phi_{j-1}(a,b)=\gamma$, where $c_j=u_{M(a)}$.
For every $\gamma\in\{d_1+1,\ldots,d_1+d_2\}$, let $S(\gamma)$ be the subset of $S$ consisting of all pairs $(a,b)$ such that $N(b)=M(a)$ and $\psi_{M(a)}(a,b)=\gamma$.
Thus $S=\bigcup_{\gamma=1}^{d_1+d_2}S(\gamma)$.

We claim that the set $S(\gamma)$ is reversible for every $\gamma\in\{1,\ldots,d_1+d_2\}$.
Suppose not.
Let $\{(a_\alpha,b_\alpha)\}_{\alpha=1}^s$ be an alternating cycle contained in $S(\gamma)$, where $s\geq 2$.
Suppose there is $\alpha\in\{1,\ldots,s\}$ with $M(a_{\alpha+1})>M(a_\alpha)$.
Then $N(b_{\alpha+1})\geq M(a_{\alpha+1})\geq M(a_\alpha)+q$, as $M(a_{\alpha+1})\equiv M(a_\alpha)\pmod{q}$.
It follows that $(a_{\alpha},b_{\alpha+1})$ is a weak comparable pair, but it violates Lemma~\ref{lem:notfar}.
This shows that $M(a_1),\ldots,M(a_s)$ are all equal.
Let $m=M(a_1)=\cdots=M(a_s)$.
Consequently, $\{(a_\alpha,b_\alpha)\}_{\alpha=1}^s$ is an alternating cycle either in $\Ar(j-1,j)$, when $\gamma\in\{1,\ldots,d_1\}$ and $c_j=u_m$, or in $S_m$, when $\gamma\in\{d_1+1,\ldots,d_1+d_2\}$.
In both cases, this is a contradiction.
\end{proof}

It remains to show that $\dim(S_m)=O(kd_1+k^2)$ for $1\leq m\leq n$.
So we fix a value of $m$ and update the meaning of $S$ by setting $S=S_m$.
It follows that
\begin{itemize}
\item $u_m<\up(a)<u_{m-1}$ in $P$ for every $a\in A(S)$,
\item $u_m\leq\dn(b)<u_{m-1}$ in $P$ for every $b\in B(S)$.
\end{itemize}
If $m=1$, then let $D=\emptyset$.
If $m\geq 2$, then let $D$ denote the set of points $x$ on the boundary of $\cgD_{m-1}$ such that $u_{m-1}\not\leq x$ in $P$.
It follows that $D$ is contained in the union of the bottom and the right side of $\cgD_{m-1}$ with excluded topmost points, and therefore $|D|\leq(2k-2)+(4k-5)-1=O(k)$.

\begin{lemma}
\label{lem:top-right}
Every pair\/ $(a,b)\in Q(S)$ satisfies at least one of the following:
\begin{enumerate}
\item\label{item:top-right1} $\dn(b)=u_m$;
\item\label{item:top-right2} the top of\/ $\cgD(a,b)$ intersects\/ $D$;
\item\label{item:top-right3} the right side of\/ $\cgD(a,b)$ intersects\/ $D$.
\end{enumerate}
\end{lemma}

\begin{proof}
Let $(a,b)\in Q(S)$.
Suppose $\dn(b)\neq u_m$, so that $u_m<\dn(b)<\up(a)<u_{m-1}$ in $P$.
The ``greedy'' construction of the sequence $\{(a_i,b_i)\}_{i=1}^{n-1}$ rejected the pair $(a,b)$, so $m\geq 2$ and $\cgD_{m-1}$ is not contained in the interior of $\cgD(a,b)$.
Since the middle of $\cgD_{m-1}$ lies in the interior of $\cgD(a,b)$, it follows that the top, the bottom, or the right side of $\cgD(a,b)$ intersects the boundary of $\cgD_{m-1}$.

If the bottom of $\cgD(a,b)$ intersects the top of $\cgD_{m-1}$ at a point $x$, then $x\leq\up(a)<u_{m-1}=\dn(b_{m-1})\leq x$ in $P$, which is a contradiction.
If the bottom of $\cgD(a,b)$ intersects the bottom or the right side of $\cgD_{m-1}$ at a point $x$, then $a_{m-1}\leq x\leq\up(a)<u_{m-1}<\up(a_{m-1})$ in $P$, which shows that $\up(a)$ is a better candidate for $\up(a_{m-1})$.
If the top or the right side of $\cgD(a,b)$ intersects the boundary of $\cgD_{m-1}$ at a point $x$ such that $u_{m-1}\leq x$ in $P$, then $\dn(b)<u_{m-1}\leq x\leq b$ in $P$, which shows that $u_{m-1}$ is a better candidate for $\dn(b)$.
We conclude that the top or the right side of $\cgD(a,b)$ intersects $D$.
\end{proof}

Let $R$ be the subset of the maximum chain $C$ consisting of $u_m$ and all points of the form $\dn(x)$ such that $x\in D$ and $u_m<\dn(x)<u_{m-1}$.
It follows that $|R|\leq 1+|D|=O(k)$.
Let $S'=\{(a,b)\in S\colon\dn(b)\in R\}$.
For every pair $(a,b)\in S$ with $\dn(b)=c_i$, we have $(a,b)\in\Inc(\Ar(i,i+1))$ and thus $\dim(\{(a,b)\in S\colon\dn(b)=c_i\})\leq d_1$.
It follows that $\dim(S')\leq|R|\cdot d_1=O(kd_1)$.

\begin{lemma}
\label{lem:right}
For every pair\/ $(a,b)\in Q(S-S')$, the right side of\/ $\cgD(a,b)$ intersects\/ $D$.
\end{lemma}

\begin{proof}
Let $(a,b)\in Q(S-S')$.
Since $b\in B(S-S')$, there is $a'\in A(S-S')$ such that $(a',b)\in S-S'$.
If $\dn(b)=u_m$, then $(a',b)\in S'$ (as $u_m\in R$), which is a contradiction.
Now, suppose the top of $\cgD(a,b)$ intersects $D$ at a point $x$.
Since $x\in W(\dn(b),b)$, we have $\dn(b)=\dn(x)\in R$, so $(a',b)\in S'$, which is a contradiction.
We have excluded the cases \ref{item:top-right1} and \ref{item:top-right2} from Lemma~\ref{lem:top-right}, so the case \ref{item:top-right3} must hold.
\end{proof}

We update the meaning of $S$ once again by setting $S=S-S'$, and we prove that $\dim(S)=O(k^2)$.
Let $\cgX$ denote the family of subsets $X$ of $D$ that are downward-closed in $D$, that is, such that $y\in X$ whenever $x\in X$, $y\in D$, and $y\leq x$ in $P$.
Every nonempty set $X\in\cgX$ is characterized by the pair of points $(x,y)$ such that $x$ is the topmost point of $X$ on the bottom of $\cgD_{m-1}$ and $y$ is the topmost point of $X$ on the right side of $\cgD_{m-1}$.
It follows that there are at most $(2k-2)(4k-5)$ nonempty sets in $\cgX$, so $|\cgX|\leq(2k-2)(4k-5)+1=O(k^2)$.

For a point $b\in B(S)$, let $\downset b=\{x\in P\colon x\leq b$ in $P\}$.
For every $b\in B(S)$, we have $\downset b\cap D\in\cgX$.
For every $X\in\cgX$, let $S_X=\{(a,b)\in S\colon\downset b\cap D=X\}$.

\begin{lemma}
$Q(S_X)=\emptyset$ for every\/ $X\in\cgX$.
\end{lemma}

\begin{proof}
Suppose there is a pair $(a,b)\in Q(S_X)$.
By Lemma~\ref{lem:right}, the right side of $\cgD(a,b)$ intersects $D$, so there is $x\in D$ such that $a\leq x\leq b$ in $P$.
Since $b\in B(S_X)$, we have $\downset b\cap D=X$, so $x\in X$.
Since $a\in A(S_X)$, there is $b'\in B(S_X)$ such that $(a,b')\in S_X$.
It follows that $\downset b'\cap D=X$ and thus $a\leq x\leq b'$ in $P$, which is a contradiction.
\end{proof}

The last lemma and Lemma~\ref{lem:weak-comp} imply that the set of incomparable pairs $S_X$ is reversible for every $X\in\cgX$.
Since $S=\bigcup_{X\in\cgX}S_X$, it follows that $\dim(S)\leq|\cgX|=O(k^2)$.
This completes the proof that $\dim(S_{RL})=O(k^2d_1+k^3)$.

\section{Connections with graph minors}
\label{sec:connections}

Recently, a number of important results connecting dimension with structural graph theory have been proved \cite{BKY16,JMM+16,JMOW-arxiv,JMT+17,JMW18,MW17,Wal17}.
In particular, the following generalization of Theorem~\ref{thm:streib-trotter} is proved in~\cite{Wal17} (see~\cite{MW17} for an alternative proof and~\cite{JMOW-arxiv,JMW18} for further extensions).

\begin{theorem}
\label{thm:minors}
For every positive integer\/ $n$, there exists an integer\/ $d$ such that if\/ $P$ is a poset that excludes\/ $\bfk$ and the cover graph of\/ $P$ does not contain\/ $K_n$ as a topological minor, then\/ $\dim(P)\leq d$.
\end{theorem}

Since posets with cover graphs excluding $K_4$ as a topological minor have dimension bounded independently of the height \cite{JMT+17}, Theorem~\ref{thm:minors} is interesting only for $n\geq 5$.
It is natural to ask whether Theorem~\ref{thm:two-chains} and Conjecture~\ref{con:Sk} can be generalized in the same vein.
To address this question, we need to take a closer look on some properties of interval orders.

An \emph{interval order} is a poset $P$ that admits an \emph{interval representation}, which we define here as an assignment $P\ni x\mapsto(\ell_x,r_x)\subset\setR$ of non-empty open intervals to the points of $P$ such that $x<y$ in $P$ if and only if $r_x\leq\ell_y$.
Such an interval representation of $P$ is \emph{distinguishing} if the endpoints $\ell_x$ and $r_x$ with $x\in P$ are all distinct.
It is easy to see that every interval order admits a distinguishing interval representation.
It is well known that $P$ is an interval order if and only if $P$ excludes $\bftwo+\bftwo$ (the standard example $S_2$) \cite{Fis70}.

Given a distinguishing interval representation of $P$, add two new intervals of the form $(a,\frac{a+b}{2})$ and $(\frac{a+b}{2},b)$ between each pair of consecutive endpoints $a$ and $b$, and call the resulting interval order $Q$.
Then $\dim(P)\leq\dim(Q)$ (as $P$ is a subposet of $Q$) and the cover graph of $Q$ has maximum degree $3$.
This and the fact that there exist interval orders with arbitrarily large dimension \cite{BRT76} lead to the conclusion (observed by Micek and Wiechert~\cite{MW-personal}) that there are interval orders with arbitrarily large dimension and with cover graphs of maximum degree $3$.
Therefore, even for $k=2$, Theorem~\ref{thm:two-chains} and Conjecture~\ref{con:Sk} cannot be generalized to posets that have cover graphs excluding $K_5$ or any other graph with maximum degree greater than $3$ as a topological minor.

By contrast, excluding a minor instead of a topological minor leads to the following observation.

\begin{proposition}
\label{pro:interval}
For every positive integer\/ $n$, there exists an integer\/ $d$ such that if\/ $P$ is an interval order and the cover graph of\/ $P$ does not contain\/ $K_n$ as a minor, then\/ $\dim(P)\leq d$.
\end{proposition}

\begin{proof}
It is proved in~\cite{KT00} that for every interval order $Q$, the interval orders that do not contain $Q$ as a subposet have bounded dimension.
Fix a positive integer $n$.
We construct a poset $Q$ with ground set $\{v_i\colon 1\leq i\leq n\}\cup\{e_{i,j}\colon 1\leq i<j\leq n\}$, where the only cover relations are $v_i<e_{i,j}<v_j$ for $1\leq i<j\leq n$.
It is an interval order witnessed by the interval representation that maps $v_i$ to the interval $(2i-1,2i)$ for $1\leq i\leq n$ and $e_{i,j}$ to the interval $(2i,2j-1)$ for $1\leq i<j\leq n$.
By the result of~\cite{KT00}, there is an integer $d$ such that if $P$ is an interval order with $\dim(P)>d$, then $P$ contains $Q$ as a subposet.
It remains to prove that if $P$ contains $Q$ as a subposet, then the cover graph of $P$ contains $K_n$ as a minor.

Let $P$ be a poset that contains $Q$ as a subposet.
For any two points $a$ and $b$ with $a<b$ in $P$, let $[a,b)_P=\{x\in P\colon a\leq x<b$ in $P\}$ and $(a,b)_P=\{x\in P\colon a<x<b$ in $P\}$.
Let
\[V_j=\bigcup_{i=1}^{j-1}[e_{i,j},v_j)_P\;\cup\;\{v_j\}\;\cup\bigcup_{k=j+1}^n(v_j,e_{j,k})_P\qquad\text{for }1\leq j\leq n.\]
It is clear that the subgraphs of the cover graph of $P$ induced on $V_1,\ldots,V_n$ are connected.
It easily follows from the definition of $Q$ that the sets $V_1,\ldots,V_n$ are mutually disjoint.
A $K_n$ minor in the cover graph of $P$ is obtained by contracting the sets $V_1,\ldots,V_n$ and deleting the vertices not in any of $V_1,\ldots,V_n$.
Indeed, for $1\leq i<j\leq n$, the cover graph of $P$ contains an edge $xe_{i,j}$ connecting $V_i$ and $V_j$, where $x$ is a maximal element in the subset $[v_i,e_{i,j})_P$ of $V_i$ and $e_{i,j}\in V_j$.
\end{proof}

In view of the discussion above, it is natural to make the following conjectures.

\begin{conjecture}
\label{con:minors-two-chains}
For every pair\/ $(k,n)$ of positive integers, there exists an integer\/ $d$ such that if\/ $P$ is a poset that excludes\/ $\bfk+\bfk$ and the cover graph of\/ $P$ does not contain\/ $K_n$ as a minor, then\/ $\dim(P)\leq d$.
\end{conjecture}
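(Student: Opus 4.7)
The plan is to follow the architecture of the proof of Theorem~\ref{thm:two-chains}, replacing each appeal to planarity by its analogue for $K_n$-minor-free cover graphs. The starting input is already available: the results of~\cite{bib:MicWie,bib:Walc} give a constant $d_1=d_1(k,n)$ such that every poset of height at most $4k-4$ whose cover graph excludes $K_n$ as a minor has dimension at most $d_1$. Combined with Lemmas~\ref{lem:Dn_iUp_j}, \ref{lem:Ar(i,i+1)}, and~\ref{lem:safe}, which are purely order-theoretic, this reduces the problem to reversing the dangerous incomparable pairs of $P$ using a number of linear extensions bounded in terms of $k$ and $n$.

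First I would reduce from $K_n$-minor-free to embeddable in a fixed surface via the Robertson--Seymour structure theorem. This produces a tree decomposition of the cover graph of $P$ of bounded adhesion whose bags are nearly embeddable in a surface of Euler genus $g=g(n)$ together with a bounded number of apex vertices and vortices of bounded pathwidth. Apex vertices can be absorbed by reserving a constant number of linear extensions placing them extremally; vortices contribute boundedly to dimension via the pathwidth-based machinery of~\cite{bib:JMMTWW}; and the tree structure can be exploited by partitioning dangerous pairs according to the bag containing both endpoints, with jumps between adjacent bags handled by an analogue of the $M(a),N(b)$ ``shell'' framework of Section~\ref{sec:opposite-side}.

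The core step is then to generalise the arguments of Sections~\ref{sec:same-side} and~\ref{sec:opposite-side} from the plane to a fixed surface $\Sigma$ of Euler genus $g$. One still fixes an embedding with $c_1$ on a distinguished face, classifies edges at each $c_i$ by the local rotation system, and carries out the four-way split $S_{LL},S_{LR},S_{RL},S_{RR}$; the blue tree is constructed as before. The main obstacle is that simple closed curves formed by witnessing paths need no longer separate $\Sigma$, so every implicit use of the Jordan curve theorem (``$x$ lies in the interior of $\cgR$'', and so on) must be replaced by a homotopy argument. I would cut $\Sigma$ along $O(g)$ simple closed curves disjoint from $C$ to produce a planar surface with boundary, pay a multiplicative cost of $2^{O(g)}$ for recording on which side of each cut the relevant witnessing paths lie, and then run the plane argument inside each resulting planar region. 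The $O(k)$ length bounds on witnessing paths given by Lemmas~\ref{lem:Dn_iUp_j} and~\ref{lem:Ar(i,i+1)} should ensure that only boundedly many homotopy classes can actually arise, keeping the final constant a function of $k$ and $n$ alone. The hardest part of this program is plausibly the homotopy bookkeeping inside the surface step: once that is settled, the Robertson--Seymour reduction and the order-theoretic reductions should go through fairly mechanically.
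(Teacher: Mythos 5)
You are attempting to prove a statement that the paper itself poses as an \emph{open conjecture} (Conjecture~\ref{con:minors-two-chains}); there is no proof in the paper to compare against, and your text is a research program rather than a proof. The one part that does go through is the order-theoretic reduction: by \cite{bib:Walc,bib:MicWie}, posets of height at most $4k-4$ with $K_n$-minor-free cover graphs have dimension at most some $d_1(k,n)$, and Lemmas~\ref{lem:Dn_iUp_j}, \ref{lem:Ar(i,i+1)}, and~\ref{lem:safe} make no planarity assumption, so reducing to dangerous pairs is fine. Everything after that is asserted, not argued, and the assertions hide exactly the difficulties that make this a conjecture.

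Concretely: (i) apex vertices cannot be ``absorbed by reserving a constant number of linear extensions placing them extremally''---an apex may be comparable to arbitrarily many points in both directions, and reversing the incomparable pairs it participates in, as well as the pairs whose witnessing paths route through it, is not a constant-cost operation; even the planar-plus-one-apex case is not handled by the machinery of Sections~\ref{sec:same-side} and~\ref{sec:opposite-side}. (ii) The claim that vortices ``contribute boundedly via \cite{bib:JMMTWW}'' misuses that result: it bounds dimension when the \emph{entire} cover graph has bounded treewidth, and there is no known way to splice a bounded-pathwidth vortex bound into a surface argument for dimension; likewise, partitioning dangerous pairs ``according to the bag containing both endpoints'' ignores that an alternating cycle can have its comparabilities $a_\alpha\leq b_{\alpha+1}$ witnessed by paths wandering across many bags. (iii) In the surface step, the whole planar argument rests on the Jordan curve theorem: Lemma~\ref{lem:main} repeatedly uses that the boundary of $\cgR(t,x,y)$ separates interior from exterior, and the greedy construction in Section~\ref{sec:opposite-side} uses that $\cgD(a_{i-1},b_{i-1})$ lies in the interior of $\cgD(a,b)$. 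On a surface of positive genus these curves need not separate, and your fix---cutting along $O(g)$ curves disjoint from $C$ and paying $2^{O(g)}$ for homotopy bookkeeping---is unsupported: you give no reason such curves can be chosen disjoint from $C$ and from the blue tree, and the paths $W(c_1,b)$ forming the blue tree (unlike $W(\dn(z),z)$ and $W(z,\up(z))$) have unbounded length, so the $O(k)$ length bounds do not limit how often they cross the cut curves or how many homotopy classes occur. Until those three points are resolved by actual arguments, you have restated the conjecture together with a plausible but unexecuted strategy, not proved it.
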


\begin{conjecture}
\label{con:minors-Sk}
For every pair\/ $(k,n)$ of positive integers, there exists an integer\/ $d$ such that if\/ $P$ is a poset that excludes the standard example\/ $S_k$ and the cover graph of\/ $P$ does not contain\/ $K_n$ as a minor, then\/ $\dim(P)\leq d$.
\end{conjecture}

Conjecture~\ref{con:minors-Sk} implies Conjecture~\ref{con:minors-two-chains}, because for every pair $(k,n)$ of positive integers, there exists an integer $m$ such that if $P$ is a poset that excludes $\bfk+\bfk$ and the cover graph of $P$ does not contain $K_n$ as a (topological) minor, then $P$ excludes the standard example $S_m$ (see \cite{MW17}, Theorem~3).
Proposition~\ref{pro:interval} shows that Conjectures \ref{con:minors-two-chains} and~\ref{con:minors-Sk} are true for $k=2$.

We remark that Proposition~\ref{pro:interval}, Conjecture~\ref{con:minors-two-chains}, and Conjecture~\ref{con:minors-Sk} are also stated in \cite{MW17}, but erroneously---with excluded topological minors in place of excluded minors.


\begin{thebibliography}{99}

\bibitem{BHPT16}
\href{https://doi.org/10.1007/s00373-015-1624-4}{Csaba Biró, Peter Hamburger, Attila Pór, and William~T. Trotter, Forcing posets with large dimension to contain large standard examples, \emph{Graphs Comb.} 32~(3), 861--880, 2016}.

\bibitem{BKY16}
\href{https://doi.org/10.1007/s11083-015-9359-7}{Csaba Biró, Mitchel~T. Keller, and Stephen~J. Young, Posets with cover graph of pathwidth two have bounded dimension, \emph{Order} 33~(2), 195--212, 2016}.

\bibitem{BRT76}
\href{https://doi.org/10.1016/0097-3165(76)90004-2}{Kenneth~P. Bogart, Issie Rabinovich, and William~T. Trotter, A bound on the dimension of interval orders, \emph{J. Comb. Theory Ser.~A} 21~(3), 319--328, 1976}.

\bibitem{BKS10}
\href{https://doi.org/10.1137/090753863}{Bartłomiej Bosek, Tomasz Krawczyk, and Edward Szczypka, First-fit algorithm for on-line chain partition problem, \emph{SIAM J. Discrete Math.} 23~(4), 1992--1999, 2010}.

\bibitem{DJW12}
\href{https://doi.org/10.1137/110855806}{Vida Dujmović, Gwenaël Joret, and David~R. Wood, An improved bound for first-fit on posets without two long incomparable chains, \emph{SIAM J. Discrete Math.} 26~(3), 1068--1075, 2012}.

\bibitem{DM41}
\href{https://doi.org/10.1007/s004530010020}{Ben Dushnik and Edwin~W. Miller, Partially ordered sets, \emph{Am. J. Math.} 63~(3), 600--610, 1941}.

\bibitem{FKT13}
\href{https://doi.org/10.1007/s11083-011-9222-4}{Stefan Felsner, Tomasz Krawczyk, and William~T. Trotter, On-line dimension for posets excluding two long incomparable chains, \emph{Order} 30~(1), 1--12, 2013}.

\bibitem{FT00}
\href{https://doi.org/10.1023/A:1006429830221}{Stefan Felsner and William~T. Trotter, Dimension, graph and hypergraph coloring, \emph{Order} 17~(2), 167--177, 2000}.

\bibitem{FTW15}
\href{https://doi.org/10.1007/s00373-014-1430-4}{Stefan Felsner, William~T. Trotter, and Veit Wiechert, The dimension of posets with planar cover graphs, \emph{Graphs Comb.} 31~(4), 927--939, 2015}.

\bibitem{Fis70}
\href{https://doi.org/10.1016/0022-2496(70)90062-3}{Peter~C. Fishburn, Intransitive indifference with unequal indifference intervals, \emph{J. Math. Psychol.} 7~(1), 144--149, 1970}.

\bibitem{GK-personal}
Grzegorz Gutowski and Tomasz Krawczyk, personal communication.

\bibitem{JMM+16}
\href{https://doi.org/10.1007/s00493-014-3081-8}{Gwenaël Joret, Piotr Micek, Kevin~G. Milans, William~T. Trotter, Bartosz Walczak, and Ruidong Wang, Tree-width and dimension, \emph{Combinatorica} 36~(4), 431--450, 2016}.

\bibitem{JMOW-arxiv}
\href{https://arxiv.org/abs/1708.05424}{Gwenaël Joret, Piotr Micek, Patrice Ossona de Mendez, and Veit Wiechert, Nowhere dense graph classes and dimension, arXiv:1708.05424}.

\bibitem{JMT+17}
\href{https://doi.org/10.1007/s11083-016-9395-y}{Gwenaël Joret, Piotr Micek, William~T. Trotter, Ruidong Wang, and Veit Wiechert, On the dimension of posets with cover graphs of treewidth $2$, \emph{Order} 34~(2), 185--234, 2017}.

\bibitem{JMW17}
\href{https://doi.org/10.1137/17M111300X}{Gwenaël Joret, Piotr Micek, and Veit Wiechert, Planar posets have dimension at most linear in their height, \emph{SIAM J. Discrete Math.} 31~(4), 2754--2790, 2017}.

\bibitem{JMW18}
\href{https://doi.org/10.1007/s00493-017-3638-4}{Gwenaël Joret, Piotr Micek, and Veit Wiechert, Sparsity and dimension, \emph{Combinatorica} 38~(5), 1129--1148, 2018}.

\bibitem{JM11}
\href{https://doi.org/10.1007/s11083-010-9184-y}{Gwenaël Joret and Kevin~G. Milans, First-fit is linear on posets excluding two long incomparable chains, \emph{Order} 28~(3), 455--464, 2011}.

\bibitem{Kel81}
\href{https://doi.org/10.1016/0012-365X(81)90203-X}{David Kelly, On the dimension of partially ordered sets, \emph{Discrete Math.} 35~(1\nobreakdash--3), 135--156, 1981}.

\bibitem{KT00}
\href{https://doi.org/10.1016/S0012-365X(99)00178-8}{Henry~A. Kierstead and William~T. Trotter, Interval orders and dimension, \emph{Discrete Math.} 312~(1\nobreakdash--3), 179--188, 2000}.

\bibitem{LMS+14}
\href{https://doi.org/10.1016/j.jcta.2014.07.009}{Michał Lasoń, Piotr Micek, Noah Streib, William~T. Trotter, and Bartosz Walczak, An extremal problem on crossing vectors, \emph{J. Comb. Theory Ser.~A} 128, 41--55, 2014}.

\bibitem{MW17}
\href{https://doi.org/10.1002/jgt.22127}{Piotr Micek and Veit Wiechert, Topological minors of cover graphs and dimension, \emph{J. Graph Theory} 86~(3), 295--314, 2017}.

\bibitem{MW-personal}
Piotr Micek and Veit Wiechert, personal communication.

\bibitem{ST14}
\href{https://doi.org/10.1016/j.ejc.2013.06.017}{Noah Streib and William~T. Trotter, Dimension and height for posets with planar cover graphs, \emph{Eur. J. Comb.} 35, 474--489, 2014}.

\bibitem{Tro78}
\href{https://doi.org/10.1007/bfb0070411}{William~T. Trotter, Order preserving embeddings of aographs, in: Yousef Alavi and Don~R. Lick (eds.), \emph{Theory and Applications of Graphs}, vol.~642 of \emph{Lect. Notes Math.}, pp.~572--579, Springer, Heidelberg, 1978}.

\bibitem{Tro-book}
William~T. Trotter, \emph{Combinatorics and Partially Ordered Sets:\ Dimension Theory}, Johns Hopkins University Press, Baltimore, 1992.

\bibitem{Tro95}
William~T. Trotter, Partially ordered sets, in: Ronald~L. Graham, Martin Grötschel, and László Lovász (eds.), \emph{Handbook of Combinatorics}, vol.~1, pp.~433--480, North-Holland, Amsterdam, 1995.

\bibitem{TM77}
\href{https://doi.org/10.1016/0095-8956(77)90048-X}{William~T. Trotter and John~I. Moore, The dimension of planar posets, \emph{J. Comb. Theory Ser.~B} 22~(1), 54--67, 1977}.

\bibitem{TWW18}
\href{https://doi.org/10.1017/9781316650295.012}{William~T. Trotter, Bartosz Walczak, and Ruidong Wang, Dimension and cut vertices:\ an application of Ramsey theory, in: Steve Butler, Joshua Cooper, and Glenn Hurlbert (eds.), \emph{Connections in Discrete Mathematics:\ A Celebration of the Work of Ron Graham}, pp.~187--199, Cambridge University Press, Cambridge,~2018}.

\bibitem{TW16}
\href{https://doi.org/10.1007/s11083-015-9369-5}{William~T. Trotter and Ruidong Wang, Planar posets, dimension, breadth and the number of minimal elements, \emph{Order} 33~(2), 333--346, 2016}.

\bibitem{Wal17}
\href{https://doi.org/10.1016/j.jctb.2016.09.001}{Bartosz Walczak, Minors and dimension, \emph{J. Comb. Theory Ser.~B} 122, 668--689, 2017}.

\end{thebibliography}
\end{document}